\documentclass[11pt]{amsart}
\usepackage[margin=1.2in]{geometry}

\linespread{1.08}

\usepackage{pinlabel}
         
\usepackage[all,cmtip]{xy}
\usepackage{amsmath,amssymb,amsthm, amscd, xypic}
\usepackage{mathtools}
\usepackage{mathbbol}
\sloppy
\usepackage[english]{babel}
\usepackage{tikz-cd, enumerate}
\usetikzlibrary{decorations.pathmorphing} 

\usepackage{mathrsfs}
\usepackage{fancyhdr}

\usepackage[normalem]{ulem}
\usepackage[all]{xy}

\usepackage{todonotes}
\usepackage{comment}

\usepackage{xcolor}

\usepackage{enumitem}

\colorlet{GREEN}{green}
\colorlet{BLUE}{blue}

\definecolor{darkgreen}{rgb}{0,0.50,0} 
\definecolor{darkred}{rgb}{0.55,0,0}
\definecolor{darkblue}{rgb}{0,0,0.6}

\usepackage[pdfborder=0,pagebackref,colorlinks,citecolor=darkgreen,linkcolor=darkred,urlcolor=darkblue]{hyperref}
\addto\extrasenglish{
    
}
%

%
%
%
\def\makeautorefname#1#2{\expandafter\def\csname#1autorefname\endcsname{#2}}
%
%
\numberwithin{equation}{section}
\makeautorefname{footnote}{footnote}%
\makeautorefname{item}{item}%
\makeautorefname{figure}{Figure}%
\makeautorefname{table}{Table}%
\makeautorefname{part}{Part}%
\makeautorefname{appendix}{Appendix}%
\makeautorefname{chapter}{Chapter}%
\makeautorefname{section}{Section}%
\makeautorefname{subsection}{Section}%
\makeautorefname{subsubsection}{Section}%
\makeautorefname{paragraph}{Paragraph}%
\makeautorefname{subparagraph}{Paragraph}%
\makeautorefname{theorem}{Theorem}%
\makeautorefname{thm}{Theorem}%
\makeautorefname{introthm}{Theorem}%
\makeautorefname{addm}{Addendum}%
\makeautorefname{mainthm}{Main theorem}%
\makeautorefname{corollary}{Corollary}%
\makeautorefname{cor}{Corollary}%
\makeautorefname{lemma}{Lemma}%
\makeautorefname{lem}{Lemma}%
\makeautorefname{sublemma}{Sublemma}%
\makeautorefname{sublem}{Sublemma}%
\makeautorefname{subl}{Sublemma}%
\makeautorefname{prop}{Proposition}%
\makeautorefname{property}{Property}
\makeautorefname{pro}{Property}
\makeautorefname{sch}{Scholium}%
\makeautorefname{step}{Step}%
\makeautorefname{conject}{Conjecture}%
\makeautorefname{conj}{Conjecture}%
\makeautorefname{questn}{Question}
\makeautorefname{quest}{Question}
\makeautorefname{qn}{Question}
\makeautorefname{definition}{Definition}%
\makeautorefname{defn}{Definition}%
\makeautorefname{defi}{Definition}%
\makeautorefname{def}{Definition}%
\makeautorefname{dfn}{Definition}%
\makeautorefname{notation}{Notation}
\makeautorefname{notn}{Notation}
\makeautorefname{rem}{Remark}%
\makeautorefname{rems}{Remarks}%
\makeautorefname{rmk}{Remark}%
\makeautorefname{rk}{Remark}%
\makeautorefname{remarks}{Remarks}%
\makeautorefname{rems}{Remarks}%
\makeautorefname{rmks}{Remarks}%
\makeautorefname{rks}{Remarks}%
\makeautorefname{example}{Example}%
\makeautorefname{examp}{Example}%
\makeautorefname{exmp}{Example}%
\makeautorefname{exam}{Example}%
\makeautorefname{exa}{Example}%
\makeautorefname{axiom}{Axiom}%
\makeautorefname{axi}{Axiom}%
\makeautorefname{ax}{Axiom}%
\makeautorefname{case}{Case}%
\makeautorefname{claim}{Claim}%
\makeautorefname{clm}{Claim}%
\makeautorefname{assumpt}{Assumption}%
\makeautorefname{asses}{Assumptions}%
\makeautorefname{conclusion}{Conclusion}%
\makeautorefname{concl}{Conclusion}%
\makeautorefname{conc}{Conclusion}%
\makeautorefname{cond}{Condition}%
\makeautorefname{const}{Construction}%
\makeautorefname{con}{Construction}%
\makeautorefname{criterion}{Criterion}%
\makeautorefname{criter}{Criterion}%
\makeautorefname{crit}{Criterion}%
\makeautorefname{exercise}{Exercise}%
\makeautorefname{exer}{Exercise}%
\makeautorefname{exe}{Exercise}%
\makeautorefname{problem}{Problem}%
\makeautorefname{problm}{Problem}%
\makeautorefname{prob}{Problem}%
\makeautorefname{prob}{Problem}%
\makeautorefname{soln}{Solution}%
\makeautorefname{sol}{Solution}%
\makeautorefname{sum}{Summary}%
\makeautorefname{oper}{Operation}%
\makeautorefname{obs}{Observation}%
\makeautorefname{ob}{Observation}%
\makeautorefname{conv}{Convention}%
\makeautorefname{cvn}{Convention}%
\makeautorefname{notation}{Notation}%
\makeautorefname{warn}{Warning}%
\makeautorefname{note}{Note}%
\makeautorefname{fact}{Fact}%
\makeautorefname{ouch0}{Counterexample}%
%

\newtheorem{thm}{Theorem}[section]

\newtheorem{prop}{Proposition}[section]

\newtheorem{lemma}{Lemma}[section]

\newtheorem{quest}{Question}[section]

\theoremstyle{definition}

\newtheorem{defn}{Definition}[section]

\newtheorem{con}{Construction}[section]

\newtheorem{exmp}{Example}[section]

\newtheorem{notation}{Notation}[section]

\newtheorem{rem}{Remark}[section]

\makeatletter
\let\c@cor=\c@thm
\let\c@prop=\c@thm
\let\c@proposition=\c@thm
\let\c@theorem=\c@thm
\let\c@lem=\c@thm
\let\c@definition=\c@thm
\let\c@conj=\c@thm
\let\c@defn=\c@thm
\let\c@def=\c@thm
\let\c@df=\c@thm
\let\c@exmp=\c@thm
\let\c@rem=\c@thm
\let\c@lemma=\c@thm
\let\c@quest=\c@thm
\let\c@sch=\c@thm
\let\c@con=\c@thm
\let\c@notation=\c@thm
\let\c@equation=\c@thm
\makeatother

\newcommand{\im}{\mathrm{Im}}

\newcommand{\End}{\mathrm{End}}

\newcommand{\SK}{\ensuremath{SK}}
\newcommand{\SKbar}{\overline{\ensuremath{SK}}}
\newcommand{\SKK}{\ensuremath{SKK}}
\newcommand{\id}{\mathrm{id}}

\def\C{\mathcal{C}}

\newcommand{\Mtheta}{\mathsf{Mfd}^\theta} 
\newcommand{\Biv}{\mathsf{Biv}}
\newcommand{\Cob}{\mathsf{Cob}}
\newcommand{\Cobf}{\mathsf{Cobf}}

\newcommand{\Cobfe}{\mathsf{Cobf}_{\varepsilon}}

\newcommand{\MTSO}{\mathrm{MTSO}}
\newcommand{\Gr}{\mathrm{Gr}}
\newcommand{\Th}{\mathrm{Th}}
\newcommand{\inter}{\mathrm{int}}

\newcommand{\mM}{\mathcal{M}}

\newcommand{\R}{\mathbb R}
\newcommand{\Z}{\mathbb Z}
\newcommand{\Q}{\mathbb Q}
\newcommand{\CP}{\mathbb{CP}}

\newcommand{\Emb}{\textup{Emb}}
\newcommand{\Mnfld}{\textsf{Mfd}}
\newcommand{\Mnfldbd}{\textup{Mfd}^\partial}

\newcommand{\tors}{\textup{tors}}

\newcommand{\colim}{\textup{colim}\,}
\newcommand{\hocolim}{\textup{hocolim}\,}

\newcommand{\mc}{\mathcal}
\newcommand{\op}{\textup{op}}

\newcommand{\Diff}{\textup{Diff}\,}

\newcommand{\Lr}{L^R}

\newcommand{\ChPerf}{\textup{Ch}^{\textup{perf}}}

\newcommand{\Xm}{X_m}

\newcommand{\sd}{\mathrm{sd}}

\newcommand{\dotp}{{{\raisebox{.2ex}{\scalebox{.5}{$\bullet$}}}}}

\DeclareMathOperator{\Cat}{\mathrm{Cat}}

\usepackage{xcolor}
\definecolor{seagreen}{RGB}{46,139,87}
\definecolor{maroon}{RGB}{128,0,0}
\definecolor{darkviolet}{RGB}{148,0,211}

\newcommand{\sbt}{\,\begin{picture}(-1,1)(0.5,-1)\circle*{1.8}\end{picture}\hspace{.05cm}}

\newlength{\storeparskip}
\setlength{\storeparskip}{\parskip}

\setlength{\parskip}{.8em}

\begin{document}
\title[Parametrized scissors congruence $K$-theory of manifolds]{Parametrized scissors congruence $K$-theory of manifolds and cobordism categories}

\author[Mona Merling]{Mona Merling}
\address{Department of Mathematics, The University of Pennsylvania}
\email{mmerling@math.upenn.edu}

\author[George Raptis]{George Raptis}
\address{
Department of Mathematics, Aristotle University of Thessaloniki, 54124 Thessaloniki, Greece}
\email{raptisg@math.auth.gr}

\author[Julia Semikina]{Julia Semikina}
\address{Paul Painlevé Mathematics Laboratory, University of Lille}
\email{iuliia.semikina@univ-lille.fr}

\begin{abstract}
We introduce a parametrized version of scissors congruence $K$-theory of manifolds with tangential structure, which includes a topologized version of the scissors congruence $K$-theory of oriented manifolds as a special case.  We examine the relation of this $K$-theory spectrum with cut-and-paste invariants, the (parametrized) cobordism category and with (bivariant) algebraic $K$-theory of spaces. We show that the scissors congruence $K$-theory of oriented manifolds agrees on $\pi_0$ with a version of the oriented cobordism category where we allow cobordisms to have free boundaries. Lastly, we show that the spectrum level refinement of the Euler characteristic from the scissors congruence $K$-theory to $K(\Z)$ detects on $\pi_1$ the Kervaire semicharacteristic. \end{abstract}

\maketitle

\begingroup%
\setlength{\parskip}{\storeparskip}
\setcounter{tocdepth}{1}
\tableofcontents
\endgroup%

\section{Introduction}

Generalizing Segal's category of conformal surfaces \cite{Segal_conformal}, Galatius, Madsen, Tillmann, and Weiss introduced a topological category $\Cob_d$ of smooth oriented cobordisms in \cite{GMTW}, 
whose space of objects is equivalent to $\underset{M}{\bigsqcup} B\Diff(M)$, where $M$ ranges over oriented closed $(d-1)$-dimensional manifolds, and whose space of (non-identity) morphisms is equivalent to $\underset{W}{\bigsqcup} B\Diff_\partial(W)$, where $W$ ranges over diffeomorphism classes of oriented compact $d$-dimensional cobordisms. The homotopy type of the classifying space $B\Cob_d$ was identified in \cite{GMTW} with the infinite loop space of the Thom spectrum $MTSO(d)$, via a parametrized version of the Pontryagin-Thom construction. This was notably used to give an alternative proof of the generalized Mumford conjecture describing the stable cohomology of the moduli spaces of complex curves and has led since then to many other groundbreaking generalizations, novel applications, and new computations concerning the homotopy types of (stabilized) diffeomorphism groups. 

The infinite loop space $B\Cob_2$ is, at least after $p$-adic completion, closely related to the topological cyclic homology of the sphere spectrum, a relationship that Madsen emphasized in his 2006 ICM talk and which remains mysterious. Pursuing this connection, B\"okstedt and Madsen  \cite{bokstedt_madsen} constructed an infinite loop map 
$$\tau\colon \Omega B\Cob_d\to A(BSO(d)),$$ which roughly views a composite of $n$ cobordisms as an $n$-step filtration of the composite manifold, and records the orientation by its classifying map to $BSO(d).$ Raptis and Steimle showed in \cite{RS14, RS17} that the restriction of $\tau$ to $B\Diff(M)$ for a closed $d$-dimensional manifold $M$ agrees with the parametrized $A$-theory Euler characteristic of the universal bundle over $B\Diff(M)$, and moreover, the map $\tau$ factors up to homotopy through the unit map $Q(BSO(d)_+) \to A(BSO(d))$. More recently, Steinebrunner \cite{steinebrunner2021classifying} gave a compelling interpretation of $TC(\Omega X)$ for simply connected $X$ of finite type in terms of reduced 1-dimensional cobordisms in $X$.

This paper goes  back to the idea of finding a $K$-theoretic approximation of the cobordism category, but considering scissors congruence $K$-theory of manifolds instead of $A$-theory.

In the 70s, Karras, Kreck, Neumann, and Ossa \cite{KKNO} introduced scissors congruence groups for closed manifolds, also known as $\SK$-groups (“schneiden und kleben”, German for “cut and paste”). The $SK$-groups were generalized in \cite{WIT} to the setting of oriented manifolds with boundary and they were extended to a $K$-theory spectrum using the framework of $K$-theory for squares categories \cite{CKMZ}. As an application of this refinement, the authors constructed a map of spectra to $K(\Z)$ which recovers the Euler characteristic.

In this paper, we construct a map from the cobordism category to (a topologized version of) the scissors congruence $K$-theory $K^{\square}(\Mnfld^{\partial, d}_{\Delta})$ of oriented manifolds with boundary. 
We show that the scissors congruence $K$-theory of oriented manifolds mediates between the oriented cobordism category and $A$-theory. More precisely, we construct a factorization 
\[\xymatrix{
& K^{\square}(\Mnfld^{\partial, d}_{\Delta}) \ar[dr] & \\
\Omega B\Cob_d \ar[ur] \ar[rr]^\tau && A(BSO(d)).
}\]

 In addition, following the bivariant perspective of \cite{RS17, RS20}, we generalize scissors congruence $K$-theory (and the factorization above) to the parametrized setting of manifold bundles with tangential structure. Specifically, we introduce squares categories of manifold bundles (over a fixed base) with $\theta$-structure for any parametrized tangential structure $\theta$, and demonstrate that these formally fit into the setting of bivariant theories in the sense of \cite{RS17, RS20}. We recall that in \cite{RS17, RS20}, the authors introduced parametrized cobordism categories and a bivariant extension of the B{\"o}kstedt-Madsen map, and they proved its factorization through the unit map as maps of bivariant theories -- this can be seen as a strong form of the Dwyer-Weiss-Williams index theorem \cite{DWW}.  Furthermore, we also examine the relation of parametrized scissors congruence $K$-theory to cut-and-paste invariants for manifold bundles.

 The map from the cobordism category $\Cob_d$ to scissors congruence $K$-theory of oriented $d$-manifolds with boundary is actually defined on the larger category $\Cobf_d$ of cobordisms that are allowed to have ``free" boundary components (in addition to the ones that represent the source and the target of the morphism). In fact, this is a full subcategory of the cobordism category $\Cob^\partial_d$ of manifolds with boundary \cite{Genauer12}, restricted to the objects which are closed oriented $(d-1)$-manifolds. The map $\Omega B\Cob_d \to K^{\square}(\Mnfld^{\partial, d}_{\Delta})$ is clearly not an equivalence -- for example, the spaces have different $\pi_0$. On the other hand, $\Omega B\Cobf_d$ is a better approximation to scissors congruence $K$-theory of oriented manifolds, and it suggests the following question which remains open:

\noindent \textbf{Question.}\emph{ Is the map $\Omega B\Cobf_d\to K^{\square}(\Mnfld^{\partial, d}_{\Delta})$ (or its versions for different parametrized tangential structures $\theta$) a weak homotopy equivalence?}

As a positive step in this direction, we calculate $\pi_0(\Omega B\Cobf_d)$ and show that it is isomorphic to the scissors congruence group $SK_d^\partial$ of oriented manifolds with boundary. This is used to conclude that the map above is a $\pi_0$-isomorphism. 

Lastly, concerning $\pi_1$, we construct elements in $K^{\square}_1$ of the scissors congruence $K$-theory of (unoriented) manifolds, which arise from diffeomorphisms of closed smooth manifolds. We compute their images under the Euler characteristic map to $K(\Z)$ and conclude that the Euler characteristic map detects the Kervaire semicharacteristic on $\pi_1$. The Kervaire semicharacteristic is an $SKK$-invariant; this is a finer relation than $SK$-invariance, where the equivalence relation keeps track of the glueing diffeomorphisms in the cut-and-paste operation \cite{KKNO}. The associated groups $SKK_d$ have a cobordism interpretation and arise also as fundamental groups of the cobordism category \cite{KKNO, ebert2013vanishing, bokstedt2014geometric}.

\subsection{Related work} We point out some related work in the literature on the connections between algebraic $K$-theory and cobordism categories. In \cite{RS19}, a cobordism model $\Cob(\mc C)$ is introduced for the $S_\dotp$-construction of a Waldhausen category $\mc C$, so that $\Omega B\Cob(\mc C)\simeq K(\mc C)$, and the authors revisited the B{\"o}kstedt-Madsen from this viewpoint. Another $K$-theory of manifolds was also considered in \cite{HRS24} to study a different relation between cut-and-paste invariants and cobordism theory, namely, to lift the cobordism cut-and-paste groups $\overline{SK}$ to a spectrum. More recently, in \cite{CalleSarazola}, an $S_\dotp$-construction for squares categories is also introduced. In a different direction, in \cite{merling2025scissors}, a scissors congruence $K$-theory spectrum for equivariant manifolds is constructed (which can also be promoted to a topologized version using the methods of the present paper); an interesting future research direction would be to explore its relationship with equivariant cobordisms.

\subsection{Outline}
In \autoref{parametrizedK} we recall the set-up of bivariant theories from \cite{RS17, RS20} and we define the bivariant theory of parametrized manifolds with tangential $\theta$-structure, as valued in the category of squares categories. Using this, we define the bivariant theories of squares and scissors congruence $K$-theory of parametrized $\theta$-manifolds. In \autoref{SKsection}, after reviewing the classical definitions of scissors congruence groups of manifolds, also known as $SK$-groups, we introduce $SK$-groups for parametrized $\theta$-manifolds and show that these arise as $\pi_0$ of the scissors congruence $K$-theory of parametrized $\theta$-manifolds. In \autoref{section: map from cob} we construct the map from the cobordism category (where cobordisms are alllowed to have free boundary) to scissors congruence $K$-theory. In \autoref{sec:Atheory} we construct the map from scissors congruence $K$-theory of manifolds to $A$-theory and show that this factorizes the B{\"o}kstedt-Madsen map through scissors congruence $K$-theory. In \autoref{sec: pi_0 computation} we show that the comparison map from the oriented cobordism category (with free boundary components) to scissors congruence $K$-theory of oriented manifolds is a $\pi_0$-isomorphism. In \autoref{section: K_1}, we construct elements in (unoriented) $K^{\square}_1$ from diffeomorphisms, and show that these recover the Kervaire semicharacteristic of the manifold under the Euler characteristic map to $K(\Z)$. Lastly, in an appendix, we show that we can also construct elements in $K_1$ for the (non-topologized) scissors congruence $K$-theory from \cite{WIT} for unoriented manifolds, so that the same computation recovering the Kervaire semicharacteristic also holds for that construction.

\subsection{Acknowledgments} 
The authors would like to thank the following people for helpful discussions: Maxine Calle, Johannes Ebert, Søren Galatius,
Renee S. Hoekzema, Achim Krause, Daniel Kasprowski, Sander Kupers, Wolfgang L{\"u}ck, Cary Malkiewich, Thomas Nikolaus, Oscar Randal-Williams, Carmen Rovi, Wolfgang Steimle, Antoine Touz\'e, and Laura Wells.  M.M. was partially supported by NSF DMS grants CAREER 1943925 and FRG 2052988. G.R. was supported by the Hellenic Foundation for Research and Innovation (H.F.R.I.) under the ``3rd Call for H.F.R.I.’s Research Projects to Support Faculty Members \& Researchers'' (Project Number: 25480) and partially supported by the SFB~1085 - \emph{Higher Invariants} funded by the DFG.  J.S. was partially supported by the Labex CEMPI (ANR-11-LABX-0007-01).

\section{Scissors congruence $K$-theory of manifolds}\label{parametrizedK}

\subsection{Bivariant theories}\label{subsec: bivariant} In this section, we follow \cite{RS17, RS20} to recall the set-up of bivariant theories, which was inspired by \cite{FM81}. 

\begin{defn}\label{thetastr}
    
    Let us fix an integer $d \geq 0$. A family of vector bundles of rank $d$ is given by a triple $\theta=( B, ~ p \colon X \to B, ~\xi \colon V \to X ),$ where $B$ is a space with the homotopy type of a CW-complex, $p$ is a fibration and $\xi$ is a numerable (real) vector bundle of rank $d$.
\end{defn}

Given such a family $\theta = (B, p, \xi)$ and a map $g \colon B' \to B$, where $B'$ has the homotopy type of a CW-complex, we define a new family $g^*\theta = (B', g^*p, g^*\xi)$ defined by pullback. (We assume for convenience that both $X$ and $V$ are fiberwise subsets over $B$ of a fixed ambient set, so that pullbacks are strictly functorial, see \cite{RS20}.) As in \cite[Section 2.1]{RS20},  there is a category $\Biv$, whose objects are the triples $\theta$, that encodes the bivariance of $\theta$: there is contravariant functoriality $\theta \to g^* \theta$ with respect to maps of base spaces $g \colon B'\to B$  (\emph{contravariant operations}), and covariant functoriality with respect to (parametrized) bundle maps $b \colon \theta \to \theta'$ that are given by a fiberwise map $X \to X'$ over the same base space $B$ together with a fiberwise isomorphism of vector bundles $\xi\to \xi'$ (\emph{covariant operations}), i.e., a pullback square over $B$:
$$
\xymatrix{
V \ar[d]_{\xi} \ar[r] & V' \ar[d]^{\xi'} \\
X \ar[r] & X'. 
}
$$
We will refer to such morphisms as (parametrized) bundle maps. 

\begin{defn} \label{def: bivariant theory}
A \textit{bivariant theory} with values in a category $E$ is a functor $
F \colon \Biv \to E.$
\end{defn}
\noindent Unraveling the definition, a bivariant theory $F$ consists of the following data:
\begin{itemize}
    \item an object $F(\theta)$ of $E$ for each family of vector bundles $\theta$ of rank $d$; 
    \item for each map $g \colon B' \to B$, a morphism (contravariant operation) $g^* \colon F(\theta) \to F(g^* \theta)$ in $E$ which is natural in $g$;
    \item for each bundle map $b \colon \theta \to \theta '$ (parametrized over the same base space $B$) a morphism (covariant operation) $b_* \colon F(\theta) \to F(\theta')$ in $E$ which is natural in $b$. 
\end{itemize}
In addition, the covariant and contravariant operations commute with each other in the sense that each of the following squares is commutative:
  \begin{center}
         \begin{tikzcd}
            F(\theta) \arrow[d, swap]{}{b_*} \arrow[r]{}{g^*} & F(g^* \theta) \arrow[d]{}{(g^*b)_*}\\
            F(\theta') \arrow[r]{}{g^*} & F(g^* \theta')
         \end{tikzcd}
  \end{center}
where $g^*b$ denotes the induced bundle map by pullback along $g$. This notion of bivariant theory was used in \cite{RS17, RS20} in the context of parametrized cobordism categories with (parametrized) tangential structures. 

Following \cite{RS17, RS20}, we consider for any $\theta = (B, p \colon X \to B, \xi \colon V \to X)$ and $n \geq 0$ the following object in $\Biv$
$$\theta_n = (B \times \Delta^n, \ p_n  \colon X \times \Delta^n \xrightarrow{p \times \id} B \times \Delta^n, \ \xi_n  \colon V \times \Delta^n \xrightarrow{\xi \times \id} X \times \Delta^n)$$
where $\Delta^n$ denotes the standard (topological) $n$-simplex. These objects determine a simplicial object 
$$\theta_{\sbt} \colon \Delta^{\op} \to \Biv, \ [n] \mapsto \theta_n,$$
where the face and degeneracy maps are defined using the contravariant functoriality with respect to the base. Thus, we may precompose any bivariant theory $F \colon \Biv \to E$ with $\theta_{\sbt}$ and obtain a simplicial object in $E$ for each $\theta \in \Biv$ (or a simplicial bivariant theory). As shown in \cite{RS17, RS20} (for space- or spectrum-valued bivariant theories), this operation takes into account the homotopy theory inherent in $\Biv$ and, after geometric realization, turns a bivariant theory into a homotopy invariant one. 

\subsection{Parametrized manifolds}

We introduce parametrized tangential structures and in the next section we generalize the definition of the squares category of $d$-dimensional manifolds with boundary from \cite{WIT} to the setting of parametrized manifolds with tangential structure. 

Each triple $\theta$ in \autoref{thetastr} gives rise to a notion of tangential structure for $B$-parametrized families of smooth $d$-manifolds with boundary. Recall that the \textit{vertical tangent bundle} of a smooth manifold bundle $\pi\colon E\to B$ with $d$-dimensional fiber $M$ is defined as the $d$-dimensional vector bundle $T_{\pi}E:=P\times_{\Diff(M)} TM \to E $, where $P$ is the principal $\Diff(M)$-bundle associated to $\pi$.

\begin{defn}
  Let $\theta=(B, p, \xi)$ be as in \autoref{thetastr}. A \emph{$\theta$-manifold} (with boundary) consists of
  \begin{enumerate}
      \item a numerable fiber bundle $\pi \colon E \to B$ with fibers given by compact smooth $d$-manifolds (possibly with boundary); 
      \item a bundle map
       $$\begin{tikzcd}
    	T_{\pi} E\arrow[r] \arrow[d,"\tau_{\pi} E"'] & V\arrow{d}{\xi}\\
    	E\arrow{r}{f} & X
    	\end{tikzcd} $$
     from the vertical tangent bundle $\tau_{\pi}E$ of $\pi \colon E \to B$  to $\xi$ such that $\pi = p f$ (i.e., $f$ is fiberwise over $B$). 
  \end{enumerate} 
     A map of $\theta$-manifolds is a map of manifold bundles over $B$, which is also a map over $X$. 
\end{defn}

We will also refer to $\theta$-manifolds as parametrized manifolds or manifold bundles with $\theta$-structure. Note that a $\theta$-structure on a manifold bundle $\pi \colon E \to B$ is essentially given by a lift $f \colon E \to X$ of $\pi$ along $p$ together with an isomorphism of vector bundles $\tau_{\pi} E \cong f^*\xi$. In the special case where $B=\ast$, this is just given by a compact smooth $d$-dimensional manifold $M$ with a structure map $f \colon M\to X$ together with an isomorphism $TM \cong f^*\xi$. We also have the following special cases which will be of interest later.

\begin{exmp}\label{orientedex}
   Let $\theta=(*,\  BSO(d)\times X, \ \gamma_d \times \id_X \colon E(\gamma_d) \times X \to BSO(d)\times X)$, where $\gamma_d$ denotes the universal oriented vector bundle of rank $d$. Then a $\theta$-manifold is an oriented singular $d$-manifold (possibly with boundary) in $X$ in the standard sense of cobordism theory. Similarly, replacing $BSO(d)$ with $BO(d)$ in the definition of $\theta$ gives an unoriented singular $d$-manifold in $X$. In particular, when $X=\ast$, this is simply a compact oriented, resp. unoriented, smooth $d$-manifold. 
\end{exmp}

\subsection{The squares $K$-theory of $\theta$-manifolds}\label{thetamanifolds} 
A formalism for $K$-theory of squares categories was introduced in \cite{CKMZ}. It was previewed in \cite{WIT} where the example of the squares category of $d$-dimensional manifolds with boundary was introduced. Loosely, a squares category generalizes both categories with exact sequences and those with subtractive sequences -- it is defined by a structure of distinguished squares. $K$-theory of squares categories aims to break up squares instead of exact or subtractive sequences, and it can be applied even in contexts which lack general pushouts or cofiber sequences. From a different viewpoint, a closely related notion was introduced earlier in the context of $K$-theory for triangulated categories \cite{Neeman} and has also been considered for $K$-theory in the context of higher homotopy categories \cite{Raptis22}.

Next we introduce the squares category of $\theta$-manifolds. We refer the reader to \cite[Definition 1.4]{CKMZ} for the definition of squares categories and functors between them. 

We recall the  notion of $SK$-embedding of manifolds with boundary from \cite{WIT}, which we then extend to parametrized manifolds.

\begin{defn}\label{SKemb}
Let $M$ and $N$ be compact smooth $d$-manifolds (possibly with boundary). An \emph{$SK$-embedding} from $N$ to $M$ is a smooth embedding $f\colon N\to M$ such that each connected component of $\partial N$ is either mapped entirely onto a boundary component of $M$ or entirely into the interior of $M$. 
\end{defn}

For any  manifold bundle $\pi \colon E \to B$ with smooth compact $d$-dimensional fibers, we denote by $\pi_{\partial} \colon \partial E \to B$ the associated $(d-1)$-dimensional manifold bundle obtained from $E$ by restricting to the boundary of the manifold in each fiber.  
Given a map of $\theta$-manifolds $f \colon E \to E'$, it is not enough to define the $SK$-embedding condition from \autoref{SKemb} simply fiberwise, since this is not strong enough to allow for a well-behaved notion of complement of $E$ in $E'$, which we will need in order to talk about cut-and-paste decompositions of parametrized manifolds. 

\begin{exmp}\label{jumpsex}
    Consider the map of trivial bundles $E_1, E_2$ over the base space $[0,1]$ with fibers given by $[0,1]$ and $[0,2]$ depicted in \autoref{fig:badSKexample}.
    \begin{figure}
        \centering
        \includegraphics[width=0.5\linewidth]{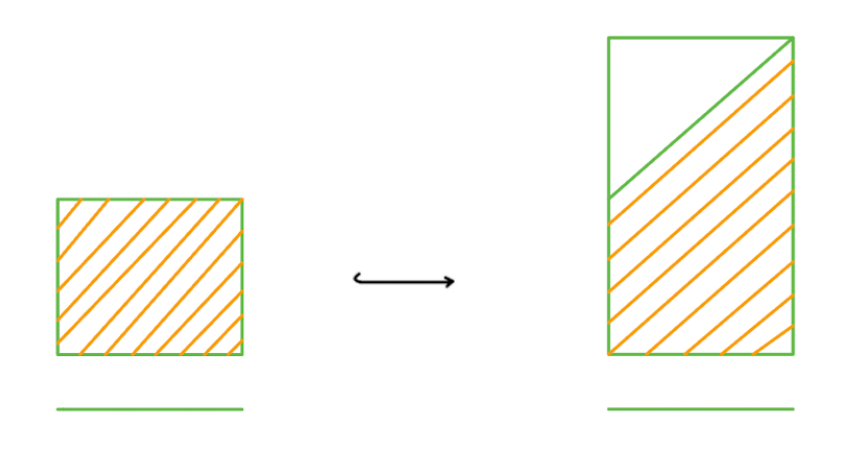}
        \caption{A fiberwise $SK$-embedding which is not an $SK$-embedding of bundles}
        \label{fig:badSKexample}
    \end{figure}
    In this example, the boundary of each fiber of $E_1$ is two points, one mapping into the interior and one mapping to the boundary of $E_2$ for each $b
    \in [0,1)$, and the boundary of the fiber above $b=1$ consists of two points that both map to the boundary of $E_2$. 
\end{exmp}

\begin{defn}\label{SKembbundle}
   A map $f \colon E \to E'$ of $\theta$-manifolds is called an $SK$-embedding of $\theta$-manifolds if it is an $SK$-embedding fiberwise over $B$ and $ f^{-1}(\partial E') \subseteq \partial E$ is open. 
\end{defn}

The extra condition in \autoref{SKembbundle} ensures that we avoid  ``jumps'' of the boundary components of the fibers mapping into the interior such as the ones in \autoref{jumpsex}, and that we can define a complement bundle of an $SK$-embedding of $\theta$-manifolds. Its main function is essentially that it allows for the existence of a parametrized tubular neighborhood. 

Let $f \colon E \to E'$ be an $SK$-embedding of $\theta$-manifolds. We make the following definitions of the concepts of interior and complement; the latter makes sense in view of our definition of $SK$-embedding of manifold bundles.

\begin{defn}\label{def: intcomp}
   The \emph{interior bundle} of a manifold bundle $E$, $\inter(E)$, is the manifold bundle given by the complement $E-\partial E$ of $\partial E$.   
   For an $SK$-embedding of manifold bundles $f\colon E\to E'$, the \emph{complement bundle} $E^c$ is a subbundle of $E'$ given by the closure of the complement $E'-f(E)$ of $f(E)$ in $E'$. \end{defn}

It is straightforward to check that $\inter(E)$ is indeed a manifold bundle with noncompact fiber, and $E^c$ is a manifold bundle with fiber given by the closure of $M'-f(M)$ in $M'$, where $M, M'$ are the corresponding fibers of $E, E'$. This fiber is indeed a manifold since $f$ is a fiberwise $SK$-embedding. In addition, given the existence of 
a tubular neighborhood for the fiberwise embedding $f$, $f$ can be identified, locally in $B$ and up to fiberwise diffeomorphisms, with a constant $SK$-embedding (see \cite[Chapter 11, esp. Proposition 11.20]{Crabb-James}). The complement $E^c$ has  a $\theta$-structure induced by the $\theta$-structure of $E'$ by restriction.

\begin{defn}
    Let $\theta=(B, p, \xi)$ be as in \autoref{thetastr}. We define $\Mtheta$ to be the category with squares where 
    \begin{itemize}
    \item the objects are $\theta$-manifolds (with boundary);
    \item the horizontal and vertical maps are $SK$-embeddings $f \colon E \hookrightarrow E'$  of $\theta$-manifolds; 
    \item the distinguished squares 
    \begin{center}
    		\begin{tikzcd}[column sep=tiny, row sep=tiny]
    			E_1\arrow[rr, hook] \arrow[dd, hook] && E_2\arrow[dd, hook]\\
    			&\square &\\
    			E_3\arrow[rr, hook] && E_4
    		\end{tikzcd}
    	\end{center}
        are commutative squares of $SK$-embeddings of $\theta$-manifolds which are (fiberwise) pushouts in the category of spaces. 
    \end{itemize}
\end{defn}
 
It is immediate from the definition that $\Mtheta{}$ satisfies the axioms for a category with squares: squares compose vertically and horizontally, a commutative square with parallel identities, either vertically or horizontally, is distinguished, and lastly, the empty manifold (bundle) is initial.

The category $\Mtheta$ has a symmetric monoidal structure in the sense of \cite[Definition 1.13]{CKMZ} given by the disjoint union $\sqcup$. In particular, the isomorphism classes of objects in $\Mtheta{}$ form a commutative monoid.

We recall the construction of the $K$-theory of squares categories \cite[\S 2]{CKMZ} in our example. 
Let $T_n \Mtheta$ be the category with objects given by length $n$ sequences of composable $SK$-embeddings of $\theta$-manifolds  
$$E_1\hookrightarrow E_2\hookrightarrow \dots \hookrightarrow E_n,$$
and the morphisms are given by commutative diagrams 
$$\begin{tikzcd}[column sep=tiny, row sep=tiny]
    E_1 \ar[rr, hook] \ar[dd, hook] && E_2 \ar[rr, hook] \ar[dd, hook] &&\dots \ar[rr, hook] && E_n \ar[dd, hook]\\
    &\square &&\square && \square &\\
    E'_1 \ar[rr, hook] && E'_2 \ar[rr, hook] &&\dots  \ar[rr, hook] && E'_n,
\end{tikzcd}$$
where all squares are distinguished. These categories assemble into a
  simplicial category which we denote $T_\dotp \Mtheta$. Then the squares $K$-theory of $\Mtheta$ is defined by 
  \[K^\square(\Mtheta) = \Omega_{\varnothing} |N_\dotp T_\dotp\Mtheta|.\]
  Here, $\Omega_{\varnothing}$ is the loop space based at the empty manifold $\varnothing \in N_0
  T_0 \Mtheta$, which is an initial object in both the vertical and horizontal directions.  

  \begin{notation}\label{not:squares}
      For a squares category $\C$ we will denote by $N_\dotp^\square \C$ the diagonal of the bisimplicial set $N_\dotp T_\dotp \C$, i.e., the simplicial set whose $n$ simplices are $n\times n$ grids of distinguished squares. It is well known that its geometric realization agrees with the geometric realization of the bisimplicial set. 
  \end{notation}
  
  By \cite[Proposition 2.5]{CKMZ}, the squares $K$-theory space $K^\square(\Mtheta)$ is an infinite loop space. By abuse of notation, from now on we will also use the same notation for the associated $\Omega$-spectrum where needed.

\subsection{Bivariant $K$-theory of manifolds}

 We show that the categories with squares $\Mtheta$ assemble into a bivariant theory in the sense of  \autoref{def: bivariant theory}, that is, we construct a functor 
$$\Mnfld^{(-)}\colon \Biv \to \Cat^\square,$$ 
where $\Cat^\square$ denotes the category of squares categories and functors of squares categories.

\noindent  Explicitly, as in \cite[Section 2.1]{RS20}, we specify the following data: 

(a) On objects, we assign to any $\theta=(B,p \colon X \to B, \xi \colon V \to X)$ the category with squares $\Mtheta$.

(b) Given a map $g \colon B'\to B$, we define a functor (contravariant operation) of categories with squares  $g^* \colon \Mnfld^{\theta} \to \Mnfld^{g^* \theta}$. An object $(\pi \colon E\to B, \tau_{\pi}E \to \xi)$ is sent to the object $(g^*\pi \colon g^* E \to B', \tau_{g^* \pi}g^*E \to g^*\xi)$ defined by taking pullbacks
$$\begin{tikzcd}
&  T_{\pi} E \arrow[dr] \arrow[d] & \\
g^*E \arrow[r] \arrow[ddr, "g^*\pi", labels=below left] \arrow[dr] & E\arrow[dr] \arrow[ddr]  & V \arrow[d, "\xi"]  \\
&    	g^* X \arrow[r, crossing over] \arrow[d] & X\arrow[d, "p"]  \\
&            B' \arrow[r, "g"] & B.
    	\end{tikzcd} $$
It is straightforward to see how this map extends to morphisms similarly by pullback. It is also evident that this assignment preserves $SK$-embeddings, distinguished squares and the empty manifold, hence $g^* \colon \Mtheta \to \Mnfld^{g^* \theta}$ is indeed a functor of categories with squares.

(c) Given a bundle map $b \colon \theta \to \theta'$, we construct a morphism (covariant operation) $b_* \colon \Mtheta \to \Mnfld^{\theta'}$. An object $(\pi \colon E \to B, \tau_{\pi} E \to \xi)$ is sent to the same manifold bundle $\pi \colon E\to B$ equipped with the induced $\theta'$-structure that is given by composition with the bundle map $b$:
$$\begin{tikzcd}
    	T_{\pi}E \arrow[r] \arrow[d] & V \arrow{d}{\xi} \arrow[r] & V' \arrow{d}{\xi'} \\
    	E \arrow[rd] \arrow[r] & X \arrow{d}{p} \arrow[r] & X' \arrow{ld}{p'}\\
            & B. &
    	\end{tikzcd} $$
It is again straightforward to extend the definition to morphisms and to see that it preserves the initial object, $SK$-embeddings and pushout/distinguished squares. 

Both contravariant and covariant functorialities in (b) and (c) are natural and commute with each other, so we obtain a functor (bivariant theory) $\Mnfld^{(-)}$, as desired. 

We consider the associated \emph{simplicial thickening} of the bivariant theory $\Mnfld^{(-)}$ (cf.  \autoref{subsec: bivariant}) --a bivariant theory with values in simplicial objects-- $$\Mnfld^{(-)_{\sbt}} \colon \Biv\to \mathrm{Fun}(\Delta^{\op}, \Cat^{\square}), \ \theta \mapsto ([n] \mapsto \Mnfld^{\theta_n}).$$

\begin{defn}
 The scissors congruence $K$-theory of $\theta$-manifolds with boundary $K^\square(\Mtheta_\Delta)$ is the geometric realization of the simplicial space $[n]\mapsto K^\square(\Mnfld^{\theta_n})$.
\end{defn}

Note that since the spaces $|N_\dotp^\square \Mnfld^{\theta_n}|$ are path-connected, geometric realization commutes up to homotopy equivalence with taking loop spaces, so the realization of this simplicial space agrees canonically up to homotopy equivalence with $\Omega |[n]\mapsto |N_\dotp^\square \Mnfld^{\theta_n}||$. As shown in \cite[Section 2]{RS17}, the simplicial thickening of the squares $K$-theory of $\Mnfld^{(-)}$ is a bivariant theory which is homotopy invariant in both variances.

\begin{rem}
$K^\square(\Mtheta)$ and $K^\square(\Mtheta_{\Delta})$ have very different homotopy types in general, because the former does not take into account the topology of the spaces of $\theta$-structures. For example, for $d=0$ and $\theta = (\ast, X, \id_X \colon X \to X)$, we have $K^\square(\Mtheta) \simeq Q(X^{\delta}_+)$ ($X^{\delta}$ denotes the underlying set of $X$ equipped with the discrete topology), whereas $K^\square(\Mtheta_{\Delta}) \simeq Q(X_+)$.    
\end{rem}

\section{Comparison with cut-and-paste invariants}\label{SKsection}

\subsection{Recollections on classical cut-and-paste groups of manifolds} 

The classical scissors congruence group $SK_d$ of smooth closed  oriented $d$-dimensional manifolds was first introduced in \cite{KKNO} as the Grothendieck group of the monoid $\mathcal{M}_d$ of diffeomorphism classes of smooth closed oriented $d$-dimensional manifolds $[M]$ under disjoint union, modulo a ``scissors congruence" or ``cut-and-paste" relation ($SK$ stands for ``schneiden und kleben"=``cut and paste" in German). The $SK$-relation is generated by the following cut-and-paste operation: cut a $d$-dimensional manifold $M$ along a codimension 1 smooth submanifold  $\Sigma$ with trivial normal bundle that separates $M$ (i.e., the complement of $\Sigma$ in $M$ is a disjoint union of two components $M_1$ and $M_2$\footnote{We do not lose any generality by asking for the cut to be separating. If it is not, we may instead make two cuts in a tubular neighborhood of the submanifold we are cutting along.}), each with (oriented) boundary diffeomorphic to $\Sigma$. Then paste back the two pieces together along an orientation-reversing diffeomorphism $\phi\colon \partial M_1 \to \partial M_2.$ We say $M$ and $M_1\cup_\phi M_2$ are ``cut-and-paste equivalent" or ``scissors congruent" (by a single cut-and-paste operation in this case).

\begin{figure}[h!]
\includegraphics[scale=.3]{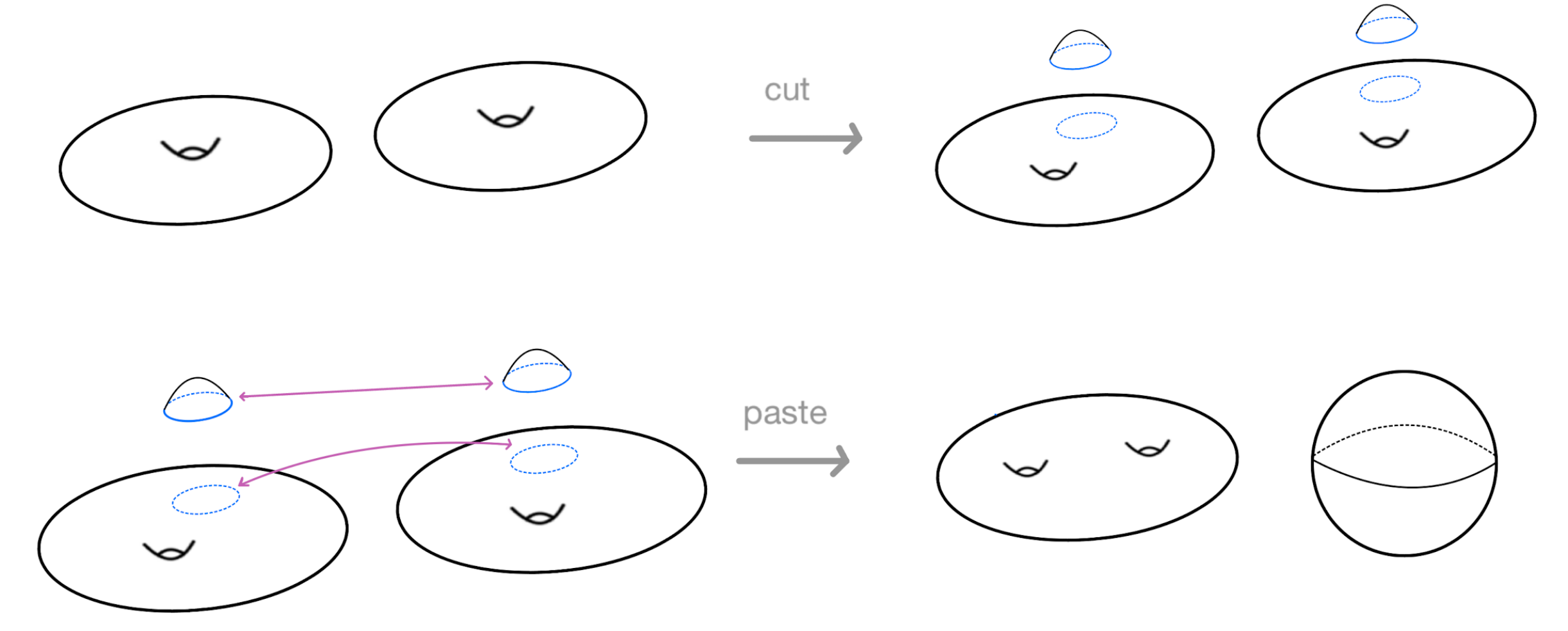}
\caption{Example of a cut-and-paste operation illustrating that $T\sqcup T$ is scissors congruent to $T \# T \sqcup S^2$ (here $T$ denotes the $2$-torus). }
\centering
\end{figure}

A version  of these groups for manifolds with boundary was introduced in \cite{WIT} (this differs from the one in \cite{KKNO}). The cut-and-paste operation is defined completely analogously to the one for closed manifolds, but insisting that we do not allow boundaries to be cut, and we require that  all boundaries which come from cutting to be pasted back together, leaving the existing boundaries of a manifold untouched by the cut-and-paste operation. We recall the definition of the $SK$-group for manifolds with boundary.

\begin{defn}[\cite{WIT}]
   The cut-and-paste group for oriented $d$-dimensional manifolds with boundary, $\SK^{\partial}_d$, is the quotient of the free abelian group on the diffeomorphism classes of smooth compact oriented $d$-dimensional manifolds (possibly with boundary) by the subgroup generated by the following relations:
  \begin{enumerate}
    \item $[M \sqcup N] \sim [M] + [N]$;
     \item Given smooth compact oriented $d$-manifolds $M_1, M_2$, closed submanifolds $\Sigma \subseteq \partial M_1$ and $\Sigma' \subseteq \partial M_2$, and orientation-preserving diffeomorphisms $\phi, \psi \colon \Sigma \to \Sigma'$, then
\[ [M_1 \cup_{\phi} \overline{M}_2] \sim [M_1 \cup_{\psi} \overline{M}_2],\]
where $\overline{M}$ denotes the manifold $M$ with the reverse orientation.
   \end{enumerate}
\end{defn}

It was proved in \cite[Theorem 2.10]{WIT} that the group $SK_d^\partial$ fits into a split exact sequence with kernel the classical group $SK_d$ for closed manifolds and cokernel the Grothendieck group of the commutative monoid of diffeomorphism classes of smooth closed oriented $(d-1)$-dimensional and nullbordant manifolds, under disjoint union.

\subsection{The cut-and-paste groups $SK(\theta)$}
We extend the definition of the cut-and-paste groups to manifold bundles with $\theta$-structure.   

Recall that $\partial E$ denotes $(d-1)$-dimensional manifold bundle (over $B$) obtained from a bundle $\pi \colon E \to B$ of smooth compact $d$-dimensional manifolds, by restricting to the boundary of the manifold 
in each fiber.

\begin{defn} \label{def: cut-and-paste of theta mnfds}
    Let $\theta = (B, p, \xi)$ be as in  \autoref{thetastr}, and let $E$ and $E'$ be $\theta$-manifolds. We say that $E$ and $E'$ are \emph{related by a cut-and-paste operation} if there exist $\theta$-manifolds $E_1, E_2, E'_1, E'_2$  and $SK$-embeddings of $\theta$-manifolds $E_i \subseteq E$ and $E'_i\subseteq E'$ for $i=1,2$ such that 
    $$E = E_1 \cup_{E_0} E_2 \ \text{ and } E' = E'_1 \cup_{E'_0} E'_2,$$
    where $E_0 \subseteq \partial E_i$, $E'_0 \subseteq \partial E'_i$ are subbundles of closed smooth manifolds for $i=1,2$, and  there are diffeomorphisms of $\theta$-manifolds 
 $$E_1 \cong E'_1 \ \text{ and } E_2 \cong E'_2$$
 which send $E_0$ to $E'_0$.  
\end{defn}

Classically, the definition of the cut-and-paste operation for oriented manifolds is stated in terms of glueing along orientation-reversing diffeomorphisms of a codimension 1 submanifold that we cut along. Of course, this could be equivalently restated in terms of decomposing manifolds. In the parametrized setting, we state the definition in terms of decomposing manifolds to avoid talking about glueing manifolds with $\theta$-structure.

Let $\mathcal{M}^{\theta}$ denote the abelian monoid of isomorphism classes of $\theta$-manifolds under disjoint union. The $\SK$-equivalence relation $\sim_{SK}$ on $\theta$-manifolds is the equivalence relation on $\mathcal{M}^{\theta}$ that is generated by cut-and-paste operations. Note that the $SK$-equivalence relation for $\theta$-manifolds does not take into account homotopies of (parametrized) tangential structures. For example, note that even in the parametrized oriented case corresponding to \autoref{orientedex}, a $\theta$-manifold is not just an oriented manifold, but it comes equipped with a structure map to $BSO(d)$ as opposed to just the homotopy class of such a map, i.e., an orientation. In order to obtain the cut-and-paste group for $\theta$-manifolds, we thus consider a further quotient. 

 \begin{defn}\label{SKtheta}
    Let $\theta = (B, p, \xi)$ be as in  \autoref{thetastr}. The \emph{cut-and-paste group} of $\theta$-manifolds $SK(\theta)$ is defined as the quotient group
     $$SK(\theta)= \Gr(\mathcal{M}^{\theta})/ \sim_{SK}, \sim_h,$$ where $\Gr(\mathcal{M}^\theta)$ is the group completion of the monoid $\mathcal{M}^\theta$, and
     where $\sim_h$ is the equivalence relation that is generated by homotopies of tangential structures, i.e., 
     $$[\pi \colon E \to B, f\colon E\to X, \tau_{\pi} E \cong f^*\xi] \sim_h [\pi \colon E \to B, g\colon E\to X, \tau_{\pi}E \cong g^*\xi]$$ whenever there is a homotopy $f\simeq g$ through bundle maps over $B$.
 \end{defn}

 We now give a parametrized refinement of \cite[Theorem 4.4]{WIT} for $\theta$-structures, namely we show that the $SK$-group of $\theta$-manifolds coincides with the $K_0$-group of the scissors congruence $K$-theory of parametrized manifolds with $\theta$-structure. For $\theta$ as in \autoref{orientedex} and $X=\ast$, we recover the $SK$-groups for oriented manifolds with boundary from \cite{WIT}.

\begin{thm}\label{K0isSK}
Let $\theta=(B, p, \xi)$ be as in \autoref{thetastr}. There is a natural isomorphism
    $$K^\square_0(\Mtheta_{\Delta})\cong SK(\theta).$$
\end{thm}

\begin{proof}
Since $K^\square_0(\Mnfld^{\theta_n})$ is abelian for every $n \geq 0$, we have
$$K^\square_0(\Mnfld^{\theta_n})\cong \pi_1(|N_{\dotp} T_{\dotp} \Mnfld^{\theta_n}|) \cong H_1(|N_{\dotp} T_{\dotp} \Mnfld^{\theta_n}|).$$
Similarly, we can identify
$$K^\square_0(\Mtheta_{\Delta})\cong H_1(|N_{\dotp} T_{\dotp} \Mnfld^{\theta_{\sbt}}|).$$ 
Based on these identifications, the fact that $|N_{\dotp} T_{\dotp} \Mnfld^{\theta_n}|$ is connected for every $n$, and using the homology spectral sequence of the simplicial space $[n] \mapsto |N_{\dotp} T_{\dotp} \Mnfld^{\theta_n}|$, we conclude that 
$$\begin{tikzcd}
    K^\square_0(\Mtheta_{\Delta})\cong\mathrm{coeq} \big( K_0^\square(\Mnfld^{\theta_1})   \arrow[r, shift left=0.75ex, "d_0"]  \arrow[r, shift right=0.75ex, "d_1", swap]  & K_0^\square (\Mnfld^{\theta_0}) \big).
\end{tikzcd}$$
The quotient that results from the co-equalizer corresponds to a ``concordance" equivalence relation that identifies those $\theta_0$-manifolds that are at the endpoints of a parametrized $\theta_1$-manifold over $B \times \Delta_1$. As the underlying manifold bundles are isomorphic in this case, this relation simplifies to homotopy of $\theta_0$-structures, and so it reduces to the equivalence relation $\sim_h$ from \autoref{SKtheta}. 

By \cite[Theorem 3.1]{CKMZ}, the group $K^{\square}_0(\Mnfld^{\theta_0})$ is isomorphic to the free abelian group on isomorphism classes of $\theta_0$-manifolds, modulo the  $\square$-relations given by
  $[\varnothing] = 0$ and $[A] + [D] = [B] + [C]$ for every distinguished square 
  \begin{equation} \label{diagram: square}
    		\begin{tikzcd}[column sep=tiny, row sep=tiny]
    			A\arrow[rr, hook] \arrow[dd, hook] && B\arrow[dd, hook]\\
    			&\square &\\
    			C\arrow[rr, hook] && D
    		\end{tikzcd}
 \end{equation} 
  in $\Mnfld^{\theta_0}$. Therefore, 
\[
K^\square_0(\Mtheta_{\Delta}) \cong \Gr(\mathcal{M}^{\theta_0})/ \sim_{\square}, \sim_h.
\]

To show that $K^\square_0(\Mtheta_{\Delta})\cong SK(\theta)$, we are left to show that the square and homotopy relations imply the $SK$-relations on $\theta_0$-manifolds, and conversely that the $SK$ and homotopy relations  imply the square relations on $\theta_0$-manifolds.

Let $E, E'$ be two $\theta_0$-manifolds related by a cut-and-paste operation. Then by  \autoref{def: cut-and-paste of theta mnfds} there is a decomposition
$$E = E_1 \cup_{E_0} E_2 \ \text{ and } E' = E'_1 \cup_{E'_0} E'_2,$$
   \noindent where $E_0 \subseteq \partial E_i$, $E'_0 \subseteq \partial E'_i$ are subbundles of closed smooth manifolds for $i=1,2$, and  there are diffeomorphisms of $\theta_0$-manifolds 
 $$E_1 \cong E'_1 \ \text{ and } E_2 \cong E'_2$$
 which send $E_0$ to $E'_0$. We claim that the classes of $E$ and $E'$ in $K^{\square}_0(\Mtheta_\triangle)$ are equal. 

Let $C_1$, $C_2$ be (parametrized, closed) collars of $E_0$ in $E_1$ and $E_2$, respectively.  Let $i_1 \colon E_0 \times [-1,1] \to E_1\cup C_2$ and $i_2 \colon E_0 \times [-1,1] \to E_2\cup C_1$ be the maps that include the top half of the cylinder onto $C_1$ and the bottom half onto $C_2$ of the corresponding targets. These are $SK$-embeddings of $\theta_0$-manifolds in the sense of the \autoref{SKembbundle}. Then we have a distinguished square of $\theta_0$-manifolds

 \begin{center}
    		\begin{tikzcd}[column sep=tiny, row sep=tiny]
    			E_0 \times [-1,1] \arrow[rr, hook, "i_1"]\arrow[dd, hook, "i_2" swap] && E_1 \cup C_2 \arrow[dd, hook]\\
    			&\square &\\
    			E_2 \cup C_1 \arrow[rr, hook] && E_1 \cup_{E_0} E_2 = E,
    		\end{tikzcd}
\end{center}   
where the $\theta_0$-structure on $E_0 \times [-1, 1]$ is induced by either of the equal composite embeddings into $E$. 
Using the homotopy relation we can deform the $\theta_0$-structure on $E_1 \cup C_2$ up to homotopy, such that it agrees with the $\theta_0$-structure on $E_1$ along an isomorphism of parametrized manifolds $E_1 \cong E_1 \cup C_2$ that reparametrizes the two-sided collar. 
Thus $[E_1 \cup C_2]=[E_1]$, and similarly $[E_2\cup C_1]=[E_2]$. The relation given by distinguished squares implies
\[
[E_0 \times [-1,1]] + [E]=[E_1] +[E_2]\]
and analogously we get that \[[E'_0 \times [-1,1]] + [E'] = [E'_1] +[E'_2].
\]
As before, we may deform the $\theta_0$-structure on $E_0\times [-1,1]$ relative $E_0 \times \{0\}$ (resp. $E_0'\times [-1,1]$) so that the structure maps to $X$ are constant in the cylindrical direction. So we may assume that these cylinders are equipped with cylindrical $\theta_0$-structures. Then the diffeomorphism of $\theta_0$-manifolds $E_1\cong E_1'$ taking $E_0$ to $E_0'$ gives a diffeomorphism of the $\theta_0$-manifolds $E_0\times [-1,1]$ and $E_0'\times [-1,1]$ equipped with cylindrical $\theta_0$-structures. As a consequence, $[E_0\times [-1,1]] = [E_0'\times [-1,1]]$, and so we conclude $[E]=[E']$.
  
Conversely, suppose we are given a distinguished square in $\Mnfld^{\theta_0}$ as in \autoref{diagram: square}. Recall \autoref{def: intcomp} of interior and complement manifold bundles for $SK$-embeddings of manifold bundles. Define $N$ to be the manifold subbundle of $\partial A$ which maps into the interior of $B$. Let $A^c$ be the complement bundle of the $SK$-embedding $A \to B$. Let 
  $$E\coloneqq A^c \sqcup (N \times [0,1]),$$ where $N \times [0,1]$ is given a $\theta_0$-structure induced by a collar neighbourhood inclusion $N \times [0,1] \hookrightarrow A^c$ such that $N \times \{0\}$ is mapped to $N \subset A^c$. Let
  $$E'\coloneqq A \sqcup C.$$
Then we can construct two $\theta_0$-manifolds that both decompose into $E$ and $E'$:
$$(A^c \cup_N A) \sqcup ( (N \times [0,1])\cup_N C) \sim_{SK} (A^c \cup_N C) \sqcup ((N \times [0,1])\cup_N A).$$
Moreover, using similar arguments as above, 
$$(A^c \cup_N A) \sqcup ( (N \times [0,1])\cup_N C)\sim_h B \sqcup C$$ and 
$$(A^c \cup_N C) \sqcup ((N \times [0,1])\cup_N A) \sim_h  D \sqcup A.$$
Therefore, $[B\sqcup C] = [D \sqcup A]$ in $SK(\theta)$, so $[B]+[C]=[A]+[D]$ in $SK(\theta)$, as required.
\end{proof}

\section{The map from the cobordism category} \label{section: map from cob} In this section we define a map from the loop space of the classifying space of the cobordism category of manifolds with free boundary to the scissors congruence $K$-theory of manifolds. We construct this in the bivariant context of the parametrized cobordism category and scissors congruence $K$-theory of $\theta$-manifolds.

\subsection{The parametrized cobordism category} We recall the definition of the parametrized smooth cobordism category from \cite{RS17}. Instead of dealing with usual cobordisms between closed manifolds, we focus on a larger category $\Cobf (\theta)$ which is a version of the cobordism category that best approximates $K^\square(\Mtheta_\Delta)$. $\Cobf (\theta)$ is the full subcategory of the parametrized cobordism category $\Cob^{ \partial}(\theta)$ of manifolds with boundary (introduced in \cite{Genauer12} and used in \cite{RS14, RS17}) whose objects are closed $\theta$-manifolds. The morphisms of $\Cobf(\theta)$ are given by parametrized $\theta$-cobordisms that are allowed to have ``free'' boundary components not coming from the objects. Below we provide a detailed description.

\begin{figure}[h!]
\includegraphics[scale=.32]{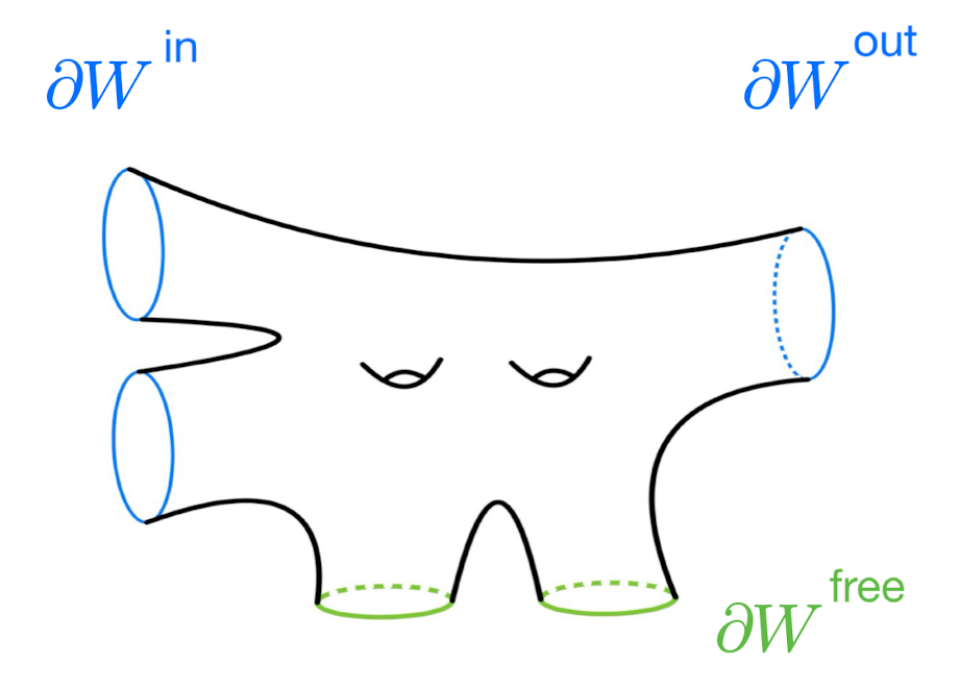}
\caption{Example of a cobordism with free boundary}
\centering
\end{figure}

Let $\theta=(B, p \colon X \to B, \xi \colon V \to X)$ be a $\theta$-structure as in \autoref{thetastr}, and let $d$ be the rank of the vector bundle $\xi$. The (non-topologized) category $\Cobf^{\delta}(\theta)$ of parametrized $\theta$-cobordisms with free boundary is defined as follows. 
\begin{defn} \label{def: category Cobf(theta)}
An \textit{object} in $\Cobf^{\delta}(\theta)$ is given by a quadruple $(E, \pi, a ,l)$, where
\begin{itemize}
    \item[(i)] $a \in \R$;
    \item[(ii)] $\pi \colon E \to B$ is a numerable fiber bundle of smooth closed $(d-1)$-dimensional manifolds, which is fiberwise smoothly embedded in 
    $B \times \{ a \} \times \R_{+} \times \R^{\infty}$, cylindrically near the collar;
    \item[(iii)] $l$ is a tangential $\theta$-structure, i.e., a bundle map $\epsilon \oplus \tau_{\pi}E \to \xi$ (fiberwise over $B$), where $\epsilon$ denotes the trivial $\R$-bundle, 
    $$\begin{tikzcd}
    	\epsilon \oplus \tau_{\pi}E \arrow[rr] \arrow[d] && V \arrow{d}{\xi} \\
    	E \arrow{rd}[swap]{\pi} \arrow[rr] && X \arrow{dl}{p} \\
            & B. &&
    	\end{tikzcd} $$

\end{itemize}
A \textit{morphism} in $\Cobf^{\delta}(\theta)$ consists of (i)-(ii) a numerable fiber bundle of compact smooth collared $d$-manifolds $$\pi \colon W \to B$$ 
fiberwise smoothly embedded in $B \times [a_0, a_1] \times \R_{+} \times \R^{\infty}$, $a_0 < a_1$, and cylindrically near the boundary. This is equipped with (iii) a tangential $\theta$-structure, given by a bundle map $$l_W \colon \tau_{\pi} W \to \xi$$ 
fiberwise over $B$ and cylindrical near the fiberwise boundary parts. The domain $M_0$ and the target $M_1$ of the morphism $W$ are given by the intersection of $W$ with  $B \times \{a_0\} \times \R_{+} \times \R^{\infty}$ and  $B \times \{ a_1 \} \times \R_{+} \times \R^{\infty}$, respectively. Composition of morphisms is defined by taking the union of embedded manifold bundles. This definition yields a non-unital category. We add formal units to turn it into a unital category.
\end{defn}

\begin{rem}
    \label{nonunital}
    The nerve of a non-unital category is a semisimplicial set, and its classifying space is defined as the geometric realization of this semisimplicial set. Alternatively, if we add formal units (see e.g. \cite{ebert2019semisimplicial}) to turn a non-unital category into a unital category, the nerve of the resulting category is a simplicial set which agrees with the image of the nerve of the non-unital category under the left adjoint to the forgetful map from simplicial sets to semisimplicial sets. This left adjoint formally adds in degeneracies to a semisimplicial set.
\end{rem}

 \begin{rem}
  If one additionally requires that morphisms $W$ don't have any boundary fiberwise apart from its source and target (so they are fiberwise embedded in $B \times [a_0, a_1] \times \R_{>0} \times \R^{\infty}$), then we would obtain the standard cobordism category, where no ``free'' boundary is allowed for the morphisms. It is a subcategory of $\Cobf^{\delta}(\theta)$ and we denote it by $\Cob^{\delta}(\theta)$.     
 \end{rem}

The parametrized cobordism category determines a bivariant theory $\theta \mapsto \Cobf(\theta)$ (see \cite{RS17}). Its simplicial thickening essentially encodes the natural topology of the cobordism category up to homotopy equivalence. The simplicial (``topological") $\theta$-cobordism category $\Cobf(\theta)$ is defined levelwise by
  \[
\Cobf(\theta)_n \coloneqq \Cobf^{\delta}(\theta_n),
  \]
where $\theta_n$ was introduced in \autoref{subsec: bivariant}. The (fat) geometric realization of the simplicial space 
$$[n] \mapsto B\Cobf^{\delta}(\theta_{\dotp})$$
is the classifying space of the parametrized $\theta$-cobordism category with free boundary
\[
B\Cobf(\theta) \coloneqq |B\Cobf^{\delta}(\theta)_{\dotp}|.
\]
Analogously, $B\Cob(\theta) \coloneqq |B\Cob^{\delta}(\theta)_{\dotp}|.$

\begin{exmp}
    In particular, for the $\theta$-structure from \autoref{orientedex} and $X=\ast$, the category $\Cob(\theta)$ recovers a simplicial version of the oriented $d$-dimensional cobordism category from \cite{GMTW}. We will denote this by $\Cob_d$.
\end{exmp}

\subsection{The idea of the bivariant transformation from the cobordism category}\label{idea} Before we give the rigorous definition of the bivariant transformation $f(\theta) \colon \Omega B\Cobf(\theta)\to K^\square(\Mtheta_\Delta)$ in \autoref{cobmapsec}, we start by discussing the idea of how this map is constructed, and explain some of the subtleties that arise when trying to make this precise. 
 
The construction of $f(\theta)$ results from the construction of a map between the unthickened models, natural in $\theta$, 
$$\sd N_{\sbt} \Cobf(\theta) \to N_{\sbt}^{\square}(\Mnfld^{\theta})$$
from the edgewise subdivision of the source to the squares construction for $\Mtheta$ (see \autoref{not:squares}). An $n$-simplex in the subdivision $\sd N_{\sbt} \Cobf(\theta)$ is a composite of $(2n+1)$ cobordisms, which we wish to send to an $n\times n$ square. Roughly, on 0-simplices, we map a cobordism $W$ to the corresponding manifold $W$. On 1-simplices, we map an 1-simplex given by a 3-composite of cobordisms to a $1\times 1$ square as depicted in \autoref{themap}, and similarly on higher simplices.

\begin{figure}[h!]
\centering
\includegraphics[scale=0.35]{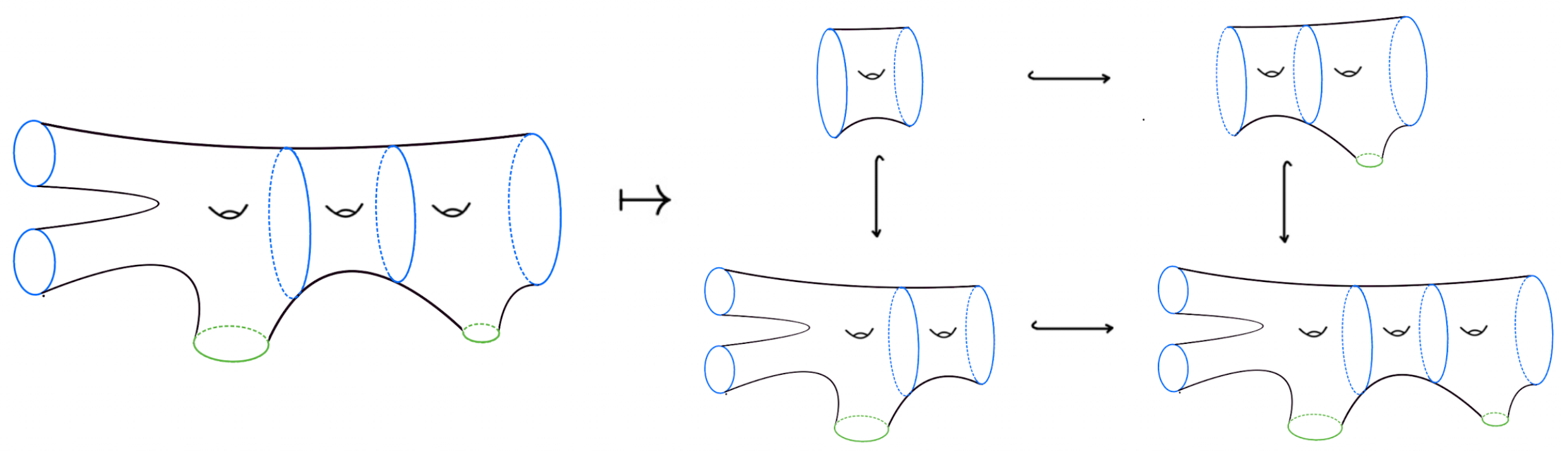} 
\caption{Idea of the map $\sd N_{\sbt} \Cobf(\theta) \to N_{\sbt}^{\square}(\Mnfld^{\theta})$ on the level of $1$-simplices}
\label{themap}
\end{figure}

However, a technical subtlety arises because we have added formal units to the cobordism category. Note that, while for a simplicial set $X_{\sbt}$, we have that $|\sd X_{\sbt}|\simeq |X_{\sbt}|$, this is not true for semisimplicial sets in general, as the following simple example illustrates.

\begin{exmp}
  Consider the 2-simplex as a semisimplicial set, i.e., a semisimplicial set with three 0-simplices, three one simplices, and one 2-simplex. Its edgewise subdivision consists only of three 0-simplices. The corresponding geometric realizations are a triangle and 3 points, which are not homotopy equivalent. Note that as a simplicial set, the 2-simplex has many degenerate simplices, and those contribute to the subdivision and ensure that the edgewise subdivision is equivalent to the original simplicial set.
\end{exmp}

So it is crucial that we add in the formal degeneracies and view $N_{\sbt} \Cobf(\theta)$ as a simplicial set since we need to subdivide it in order to create the map as depicted above. But then the technical question arises as to where to send the formal identities. This leads us to introducing some intermediary categories of $\varepsilon$-collared parametrized $\theta_k$-cobordisms in the next section which provide a canonical way to map the formal identities.

\subsection{The definition of the bivariant transformation from the cobordism category}\label{cobmapsec}

In this section we define the bivariant transformation for each $\theta=(B, p \colon X \to B, \xi \colon V \to X)$ (\autoref{thetastr})
$$f(\theta) \colon \Omega_{\varnothing} B\Cobf(\theta)\to K^\square(\Mtheta_\Delta)$$
from the looped parametrized cobordism category functor to the parametrized scissors congruence $K$-theory of manifolds  functor, using the idea from \autoref{idea}. We first introduce categories of $\varepsilon$-collared parametrized cobordisms in order to be able to define the map on the formal units.

Denote by $\Cobfe^{\delta}(\theta_k)$ the category of $\varepsilon$-collared parametrized $\theta_k$-cobordisms, $\epsilon > 0$. The $\varepsilon$-collar condition means that we only allow those morphisms $W$ which are fiber bundles of manifolds that are $\varepsilon$-cylindrical near the boundary. More precisely, in the notation of the \autoref{def: category Cobf(theta)}
\[
W \cap \big( B \times [a_0, a_0 + \epsilon] \times \R_{+} \times \R^{\infty} \big) = M_0 \times [a_0, a_0 + \epsilon],
\]
\[
W \cap \big( B \times [a_1-\varepsilon, a_1] \times \R_{+} \times \R^{\infty} \big) = M_1 \times [a_1-\epsilon, a_1].
\]

We view $\Cobfe^{\delta}(\theta_k)$ as a unital category by adding in formal identities as in \autoref{nonunital}. Intuitively, one could think of the identity morphisms in $\Cobfe^{\delta}(\theta_k)$ as cobordisms of length $0$. 

\noindent
\begin{con}\label{L}
We start by defining an \textit{enlargement map}, denoted by $L$, that takes a morphism $W$ in $\Cobfe^{\delta}(\theta_k)$ as input and outputs a $\theta_k$-manifold obtained by attaching cylinders fiberwise to the boundary components of $W$. Recall that a morphism in $\Cobfe^{\delta}(\theta_k)$ is given by a fiber bundle of compact smooth $d$-manifolds $\pi \colon W \to B$ embedded fiberwise in $B \times [a_0, a_1] \times \R_{+} \times \R^{\infty}$ and $\varepsilon$-cylindrically near the boundary. The map $L$ extends each fiber of $W$ by attaching cylinders of length $1$ to each incoming and outgoing boundary of the fiber, resulting in a new fiber bundle $L(W) \to B$ embedded fiberwise in $B \times [a_0-1, a_1+1] \times \R_{+} \times \R^{\infty}$ such that
  \[
  L(W) \cap \big( B \times [a_0, a_1] \times \R_{+} \times \R^{\infty} \big) =W,
  \]
  \[
  L(W) \cap \big( B \times [a_0-1, a_0] \times \R_{+} \times \R^{\infty} \big) =M_0 \times [a_0-1, a_0],
  \]
   \[
  L(W) \cap \big( B \times [a_1, a_1+1] \times \R_{+} \times \R^{\infty} \big) =M_1 \times [a_1, a_1+1].
  \] 
  We view $L(W)$ as a $\theta_k$-manifold. When $W$ is a formal identity of an object $M$, the $\theta_k$ manifold $L(W)$ is a cylinder on $M$ of length 2. 
  
  Let $W$ and $W'$ be two composable morphisms in $\Cobfe^{\delta}(\theta_k)$, then there is an $SK$-embedding of $\theta_k$-manifolds given by the inclusion 
  \[
  i \colon W \hookrightarrow W \cup W'.
  \]
  Let us define an induced  $SK$-embedding of $\theta_k$-manifolds
  \[
    L(i) \colon L(W) \to L(W \cup W').
  \]
  By construction $L(W)$ is obtained by the fiberwise attachments of unit cylinders to incoming and outgoing boundary components of the fibers of $W$. Since the incoming boundary of $W$ and of $W \cup W'$ coincide the map $i$ extends to the cylinders attached to the incoming boundary components via the identity. Since $W'$ is $\varepsilon$-cylindrical near its boundary, we define $L(i)$ to map the length $1$ cylinders attached to the outgoing boundary components of $W$ to the $\varepsilon$-collars of the incoming boundary components of $W'$. This construction yields the desired map $L(i)$.
 \end{con}   
\begin{figure}[h!]
\includegraphics[scale=0.4]{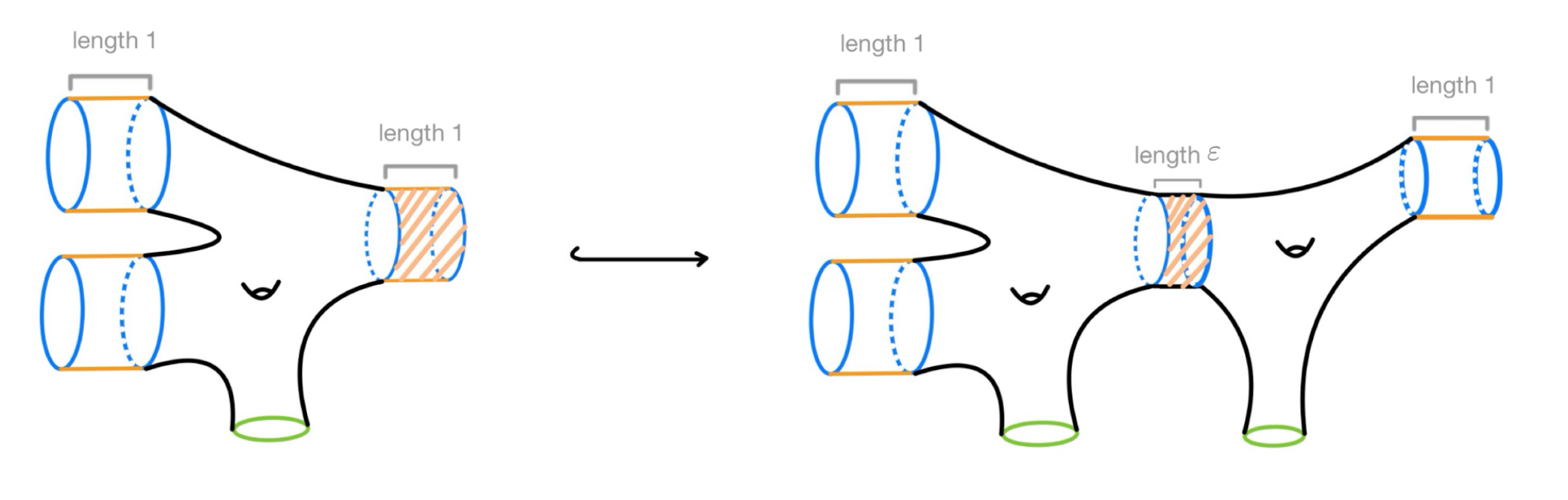}
\caption{Example of an $SK$-embedding $L(i)$ in a fiber}
\centering
\end{figure}

The proof of the following lemma is straightforward. 
\begin{lemma} \label{lem: functoriality of the enlargement map}
    Let $W_0, W_1, W_2$ be a sequence of three composable morphisms in $\Cobfe^{\delta}(\theta_k)$. Denote by
    \[
    i \colon W_0 \to W_0 \cup W_1,
    \]
    \[
    j \colon W_0 \cup W_1 \to W_0 \cup W_1 \cup W_2
    \]
 the obvious inclusions. Then  
    \[
    L(j) \circ L(i)=L(j \circ i).
    \]
\end{lemma}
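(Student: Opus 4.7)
The plan is to verify the equality by decomposing the source $L(W_0)$ into its three natural fiberwise pieces and checking that both sides agree on each piece. By \autoref{L}, $L(W_0)$ decomposes as the union of $W_0$ itself, an incoming cylinder $M_{\text{in}} \times [a_0 - 1, a_0]$ attached to the incoming boundary of $W_0$, and an outgoing cylinder $M_{\text{out}} \times [a_1, a_1 + 1]$ attached to its outgoing boundary. It suffices to compare $L(j) \circ L(i)$ with $L(j \circ i)$ on each of these three pieces, and in each case to produce the same map into $L(W_0 \cup W_1 \cup W_2)$.

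On the piece $W_0 \subseteq L(W_0)$ both sides reduce to the canonical inclusion $W_0 \hookrightarrow W_0 \cup W_1 \cup W_2$, since the enlargement map $L$ is the identity on the underlying cobordism by definition. On the incoming cylinder, the incoming boundary of $W_0$ is also the incoming boundary of $W_0 \cup W_1$ and of $W_0 \cup W_1 \cup W_2$, so both $L(i)$ and $L(j \circ i)$ attach the same length-$1$ incoming cylinder and identify it via the identity map, and $L(j)$ then acts as the identity on this cylinder as well.

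The only nontrivial piece is the outgoing cylinder. By construction of $L(i)$, the length-$1$ cylinder on $M_{\text{out}}$ is sent into the $\varepsilon$-collar of the incoming boundary of $W_1$ inside $W_1 \subset L(W_0 \cup W_1)$ via a fixed rescaling. This image lies entirely in $W_0 \cup W_1$, on which $L(j)$ acts as the canonical inclusion into $W_0 \cup W_1 \cup W_2 \subset L(W_0 \cup W_1 \cup W_2)$. So $L(j) \circ L(i)$ identifies the outgoing cylinder of $L(W_0)$ with the $\varepsilon$-collar of $W_1$ viewed inside $L(W_0 \cup W_1 \cup W_2)$. On the other hand, applying $L$ directly to the inclusion $j \circ i \colon W_0 \hookrightarrow W_0 \cup W_1 \cup W_2$ produces, on the outgoing cylinder of $L(W_0)$, the same map into the same $\varepsilon$-collar, by exactly the same rescaling prescription. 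Hence $L(j) \circ L(i) = L(j \circ i)$.

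The main (and essentially only) thing to check is that the rescaling used to embed a length-$1$ cylinder into a length-$\varepsilon$ collar in \autoref{L} is specified in a way that depends only on $W_1$ and $\varepsilon$, not on the ambient cobordism; this is manifest from the construction, since the $\varepsilon$-cylindrical structure near $\partial W_1$ is intrinsic to $W_1$ and is preserved by inclusion into any larger composite.
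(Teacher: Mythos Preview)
Your proof is correct and is precisely the kind of straightforward piece-by-piece verification the paper has in mind; the paper itself gives no argument, simply declaring the lemma ``straightforward.'' You also silently correct the evident typo in the statement (the composite is $j \circ i$, not $i \circ j$), and your observation that the rescaling into the $\varepsilon$-collar depends only on the local collar data of $W_1$---not on whether $W_1$ sits alone or inside $W_1 \cup W_2$---is exactly the point that makes the outgoing-cylinder case go through.
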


For a given structure $\theta$, we now construct the components of a transformation at every simplicial level $k$ as a simplicial map from the edgewise subdivision of the nerve of $\Cobfe^{\delta}(\theta_k)$, 
\begin{equation}\label{mapfromBCob}\sd N_{\sbt}\Cobfe^{\delta}(\theta_k) \to N_{\sbt}^{\square}(\Mnfld^{\theta_k}),\end{equation} 
\noindent which is simplicial in $k$.

An $m$-simplex in the subdivision $\sd N_{\sbt}\Cobfe^{\delta}(\theta_k)$ is given by a string of $(2m+1)$ composable morphisms in $\Cobfe^{\delta}(\theta_k)$, which we index as follows
\[
W_m W_{m-1} \ldots W_1 W_0 W'_1 \ldots W'_{m-1} W'_m .
\]
Any of these morphisms could be formal identities, which we identify with the corresponding object (cobordism of length 0). For $0<i<m$ the $i$-th face map replaces the following parts of the string $W_{i+1}W_i$ and $W'_iW'_{i+1}$ by the corresponding composition of morphisms $W_{i+1} \cup W_i$ and $W'_i \cup W'_{i+1}$. The 0-th face map replaces the part of the string $W_1 W_0 W'_1$ by the composite of morphisms $W_1 \cup W_0 \cup W'_1.$ The $m$-th face map removes $W_m$ and $W'_m$ from the string.

 On $m$-simplices, we define the map from \autoref{mapfromBCob} by sending such a $(2m+1)$-string to the $m\times m$ square diagram of spaces with $W_0$ in the upper left corner, which successively includes the pieces to the right of it in the above composite horizontally, and the pieces to the left of it in the above composite vertically. Afterwards, we apply the enlargement map $L$ to each entry of the square to obtain an element in $N_{\sbt}^{\square}(\Mnfld^{\theta_k})$. For example, on 0-simplices, a morphism $W_0$ maps to $L(W_0)$, viewed as a manifold with boundary, which is a 0-simplex in $N_{\sbt}^{\square}(\Mnfld^{\theta_k})$. On 1-simplices, a composite $W_1W_0W'_1$ maps to the 1-simplex given by the square
$$ \begin{tikzcd}
     L(W_0) \arrow[r] \arrow[d] & L(W_0\cup W'_1) \arrow[d]\\
     L(W_1\cup W_0) \arrow[r] & L(W_1\cup W_0 \cup W'_1),
 \end{tikzcd}$$
 a 5-composite of morphisms similarly goes to a $2\times 2$ square, and so on. 
 We note that the $i$-th face map on $N_{\sbt}^{\square}(\Mnfld^{\theta_k})$ is given by deleting the $i$-th row and the $i$-th column, and degeneracies are given by identity maps on objects and repeating vertical maps on morphisms. Using \autoref{lem: functoriality of the enlargement map} it is easy to see that the constructed map is simplicial.  

Thus, for any $\theta$ and $k \geq 0$, the map we constructed induces a map on (fat) geometric realizations
$$|N_{\sbt} \Cobfe^{\delta}(\theta_k)|\simeq |\sd N_{\sbt} \Cobfe^{\delta}(\theta_k)|\to |N_{\sbt}^{\square}(\Mnfld^{\theta_k})|,$$
and upon realizing in the other simplicial direction (in $\theta_k$) and taking loops we obtain a map
\begin{equation}\label{map: f_epsilon}
f_\varepsilon(\theta) \colon \Omega_{\varnothing} B\Cobfe(\theta)\to K^\square(\Mtheta_\Delta).
\end{equation}

For $\epsilon_1>\epsilon_2$, there is an inclusion $B \Cobf_{\epsilon_1}^{\delta}(\theta_k)\to B \Cobf_{\epsilon_2}^{\delta}(\theta_k)$, and we have
    \begin{equation} \label{eq: BCob as a colimit of BCobe}
  B\Cobf^{\delta}(\theta_k) =  \colim_{\varepsilon \to 0} B \Cobfe^{\delta}(\theta_k)\simeq \hocolim_{\varepsilon \to 0} B \Cobfe^{\delta}(\theta_k).
    \end{equation}

The maps $f_\varepsilon(\theta)$ are natural in $\varepsilon > 0$ up to canonical homotopy, so there is an induced canonical map
\[
f(\theta) \colon \Omega_{\varnothing} B\Cobf(\theta)\to K^\square(\Mtheta_\Delta).
\]
Similarly, the (homotopy class of the) map $f(\theta)$ is natural in $\theta$ (in the homotopy category). It will suffice for our purposes here to work in the homotopy category (as target of these bivariant theories), however, we note that with some additional care involving choices of preferred homotopies, it is also possible to promote $f(\theta)$ to a homotopy coherent natural transformation or a natural transformation of bivariant theories taking values in the respective $\infty$-category.

Since $\Cob^{\delta}(\theta)$ is a subcategory of $\Cobf^{\delta}(\theta)$ the map constructed above restricts to a map from $\Omega_{\varnothing} B\Cob(\theta)$. To summarize, we have a composition of maps
\begin{equation} \label{map: precomposition of f with Bcob}
\Omega_{\varnothing} B\Cob(\theta) \to \Omega_{\varnothing} B\Cobf(\theta) \to K^\square(\Mtheta_\Delta).
\end{equation}

\section{The map to the algebraic $K$-theory of spaces}\label{sec:Atheory}
In this section we construct a map $g(\theta)$ from the parametrized squares $K$-theory of manifolds to bivariant $A$-theory. We then verify that its composition with the map $f(\theta)$ of \autoref{section: map from cob} recovers the parametrized Bökstedt-Madsen map $\tau(\theta)$ from \cite{RS17}.

\subsection{Bivariant $A$-theory} We briefly recollect the definition of the bivariant $A$-theory following \cite{WilliamsBiv, RS14, RS17}. This associates to a fibration $p \colon X \to B$ a $K$-theory spectrum $A(p)$ that is the $K$-theory of a Waldhausen category of retractive spaces over $X$ suitably related to $p$. Denote by $\mathcal{R}(X)$ the Waldhausen category of retractive spaces over $X$ and let $\mathcal{R}^{hf}(X) \subset \mathcal{R}(X)$ be the full subcategory of homotopy finite objects. The algebraic $K$-theory or  $A$-theory of a space $X$ is defined as the $K$-theory of the Waldhausen category $\mathcal{R}^{hf}(X)$ \cite{waldhausen2006algebraic}.

To define $A(p)$ we consider those retractive spaces over $X$ that define families of homotopy finite retractive spaces over the fibers of $p$, parametrized by the points of $B$. For technical reasons we assume that $B$ has the homotopy type of a CW-complex.

\begin{defn}
Let $p \colon X \to B$ be a fibration.  Let $\mathcal{R}^{hf}(p) \subset \mathcal{R}(X)$ be the full subcategory of those retractive spaces $X \xhookrightarrow[]{i} Y \xrightarrow[]{r} X$ such that:
\begin{enumerate}
    \item the composite $p \circ r \colon Y \to B$ is a fibration;
    \item for each $b \in B$ the fiber $(p \circ r)^{-1}(b)$ is homotopy finite as an object of $\mathcal{R}(p^{-1}(b))$.
\end{enumerate}
\end{defn}
The category $\mathcal{R}^{hf}(p)$ is a Waldhausen subcategory of $\mathcal{R}(X)$.

\begin{defn}
    The bivariant $A$-theory of $p \colon X \to B$ is defined to be the space
    \[
    A(p) \coloneqq K(\mathcal{R}^{hf}(p))= \Omega |wS_{\dotp} \mathcal{R}^{hf}(p)|.
    \]
\end{defn}
It was shown in \cite{RS14} that the assignment 
\[
\theta=(B, p, \xi) \mapsto A(p)
\]
gives a spectrum-valued bivariant theory.
Note that the bundle $\xi$ does not play a role in the definition of $A$.

\subsection{The parametrized Bökstedt-Madsen map} \label{subsection: BM-map} In \cite{RS17}, the Bökstedt-Madsen map from the loop space of the classifying space of the $d$-dimensional cobordism category  to $A(BO(d))$ was extended to allow arbitrary parametrized $\theta$-structures. Namely, for every $\theta=(B, p, \xi)$, the authors defined a bivariant transformation\footnote{Actually, in \cite{RS17} this map is defined on the even larger cobordism category of cobordisms with boundary $B\Cob_d^\partial$ studied by Genauer \cite{Genauer12}. We will use only the restriction of this to the category of cobordisms with free boundary $B\Cobf_d$.}
    \[
       \tau(\theta) \colon \Omega B \Cobf_d(\theta) \to A_{\Delta}(p),
    \]
    where $A_{\Delta}(p)$ denotes the simplicial thickening of $A(p)$ -- also referred to as the \textit{thick model} of bivariant $A$-theory. Note that the spectra $A_{\Delta}(p)$ and $A(p)$ are equivalent (see \cite[Section 3]{RS14}). 
    
This transformation $\tau(\theta)$ is induced by simplicial maps of simplicial categories 
   \[
      t_{\dotp}^m \colon N_{\dotp} \Cobf^{\delta}(\theta_m) \to w S_{\dotp} \mathcal{R}^{hf}(p_m), ~m \geq 0,
   \]
where the set $N_k \Cobf^{\delta}(\theta_m)$ is regarded as a category with only identity morphisms.   
An object in $N_k \Cobf^{\delta}(\theta_m)$ is given by the sequence of $k$ composable bundles of cobordisms over $B \times \Delta^m$:
  \[
    E[a_0, a_1],~ E[a_1, a_2], ~\ldots~ ,  ~E[a_{k-1}, a_k],
  \]
  which are endowed with a fiberwise tangential $\theta_m$-structure. It is mapped to the diagram of retractive spaces:
  \[
  Ar[k] \to \mathcal{R}^{hf}(p_m)
  \]
  \[
  (i \leq j) \mapsto E[a_i, a_j] \cup_{E(a_i)} (X \times \Delta^m),
  \]
  where $E[a_i, a_j]$ denotes the union of $E[a_i, a_{i+1}]$, \ldots, $E[a_{j-1}, a_{j}]$ and $E(a_i)$ is the incoming (resp. outgoing) boundary of $E[a_i, a_{i+1}]$ (resp. $E[a_{i-1}, a_i]$). This induces a bisimplicial map
  \[
    t_{*, \dotp}^m \colon N_{*} (N_{\dotp} \Cobf^{\delta}(\theta_m)) \to N_{*} (w S_{\dotp} \mathcal{R}^{hf}(p_m)), ~m \geq 0,
  \]
  which upon taking the geometric realization (as trisimplicial map) yields a bivariant transformation $\tau(\theta)$ (for more details, see \cite[Section 5]{RS17} and \cite[Section 5.1]{RS14}).
  
\subsection{The definition of the bivariant transformation to $A$-theory} \label{subsec:map_to_A-theory}
We construct a bivariant transformation $$g(\theta) \colon K^\square(\Mtheta_\Delta) \to A(p).$$ 
We will again use the thick model $A_{\Delta}(p)$. In addition, instead of the $S_{\dotp}$-construction, we will make use of the $T^{+}_{\dotp}$-construction, a refined version of the Thomason construction for Waldhausen $K$-theory. For a Waldhausen category $\C$, there is a simplicial (Waldhausen) category $(w)T^{+}_{\dotp} \C$ defined as follows: the category $w T^{+}_{k} \C$ has objects given by sequences of cofibrations 
  \[
    C=[C_0 \hookrightarrow C_1\hookrightarrow \dots \hookrightarrow C_k]
  \]
together with choices of quotients $C_{ij}=C_j/C_i$ for every $i \leq j$, such that $C_{ii}$ is required to be the zero object. The morphisms are natural transformations $C \to C'$ satisfying the condition that for every $i \leq j$ the induced map 
 \[
   C'_i \cup_{C_i} C_j \to C'_j
  \]
is a weak equivalence in $\C$. The forgetful map $w T^{+}_{k} \C \to w S_{k} \C$, which on objects forgets the objects $C_i$ and only retains the data of the quotients $C_{ij}$, as they form an $\mathrm{Ar}[k]$-diagram with the induced maps, is a homotopy equivalence (see \cite[p. 334]{waldhausen2006algebraic}). 

The map $g(\theta)$ is obtained from a bisimplicial map
\[
 g^m_{*, \dotp} \colon N_{*} T_\dotp \Mnfld^{\theta_m} \to      N_{*} (w T^{+}_{\dotp} \mathcal{R}^{hf}(p_m))
\]
which is natural for each $[m] \in \Delta^{\op}$, that is, it defines a trisimplicial map, after passing to the geometric realizations and loop spaces. 

For each $m \geq 0$, this bisimplicial map is given by the nerve of a functor, which is natural in $k \geq 0$,
\[
g^m_k \colon T_k \Mnfld^{\theta_m} \to w T^{+}_k \mathcal{R}^{hf}(p_m).
\]
The latter sends a sequence of $\SK$-embeddings of manifold bundles over $B \times \Delta^m$ endowed with fiberwise $\theta_m$-structures
  \[
    E_0 \hookrightarrow E_1 \hookrightarrow \dots \hookrightarrow E_k
  \]
to the sequence of cofibrations of spaces in  $\mathcal{R}^{hf}(p_m)$ 
  \[
    E_0 \sqcup \Xm \hookrightarrow E_1 \sqcup \Xm  \hookrightarrow \dots \hookrightarrow E_k \sqcup \Xm,
  \]
where $\Xm$ denotes $X \times \Delta^m$ and the choice of quotient $E_{ij}$ is given by the disjoint union of $\Xm$ with the quotient space $E_j/\im(E_i)$. This gives a simplicial map of simplicial categories
  \[
g^m_{\dotp} \colon T_{\dotp} \Mnfld^{\theta_m} \to w T^{+}_{\dotp} \mathcal{R}^{hf}(p_m),
  \]
inducing the desired bisimplicial map
\[
 g^m_{*, \dotp}  \colon N_{*} T_\dotp \Mnfld^{\theta_m} \to      N_{*} (w T^{+}_{\dotp} \mathcal{R}^{hf}(p_m)).
\]

  \begin{prop} \label{prop: decomposing BM-map as fg}
 For every $\theta = (B, p \colon X \to B, \xi \colon V \to X)$, the composition
       \[
 \Omega B \Cobf(\theta) \xrightarrow{f(\theta)} K^\square(\Mtheta_{\Delta}) \xrightarrow{g(\theta)} A(p)
      \]
agrees up to homotopy with the map $\tau(\theta) \colon \Omega B \Cobf(\theta) \to A(p)$.
\end{prop}

  \begin{proof}
We consider the following diagram

\begin{equation}\label{simplicialdiag}
\begin{tikzcd} 
    \sd N_{*}\Cobfe^{\delta}(\theta_m) \arrow{r}{(f_{\epsilon}^m)_{*}} \arrow{d}[swap]{\text{last-vertex}} & \mathrm{diag}(N_{*} T_\dotp \Mnfld^{\theta_m}) \simeq N_{*} T_\dotp \Mnfld^{\theta_m} \arrow{r}{g^m_{*, \dotp}} & N_{*} (w T_{\dotp}^{+}\mathcal{R}^{hf}(p_m)) \arrow{dd}{\text{quotient}}\\
     N_{\dotp}\Cobfe^{\delta}(\theta_m) \arrow{d}[swap]{\mathrm{diag}}&&\\
	N_{*} N_{\dotp}\Cobfe^{\delta}(\theta_m)\arrow{rr}{(t_\varepsilon^m)_{*, \dotp}} && N_{*}(w S_{\dotp} \mathcal{R}^{hf}(p_m)).
	\end{tikzcd}
 \end{equation} 
where: 
  \begin{itemize}
\item  $f_{\epsilon}^m$ is the map that yields the map $f_\epsilon(\theta_m)$ (\autoref{cobmapsec}); 
      \item $t_\varepsilon^m$ indicates the restriction of the map $t^m$  (\autoref{subsection: BM-map}) to the subcategory $\Cobfe^{\delta}(\theta_m)$ of $\epsilon$-cylindrical cobordisms;
      \item $\mathrm{diag}$ denotes the map that sends a $k$-simplex in the source to the $(k, k)$-bisimplex obtained by the extension via the $k$ copies of the identity morphism in the second simplicial direction.  
  \end{itemize}  

We claim that the diagram commutes up to a preferred homotopy. We start with a $k$-simplex in $\sd N_{*}\Cobfe^{\delta}(\theta_m)$, which is given by a sequence of $(2k+1)$ bundles of composable cobordisms over $B \times \Delta^m$ with $\theta_m$-structure:
  \[
    E[a_0, a_1],~ E[a_1, a_2], ~\ldots~ ,  ~E[a_{2k}, a_{2k+1}].
  \]
Under the last-vertex map it is mapped to the following sequence of $k$ composable bundles of cobordisms in $N_k\Cobfe^{\delta}(\theta_m)$:
  \[
    E[a_{k+1}, a_{k+2}],~ E[a_{k+2}, a_{k+3}], ~\ldots~ ,  ~E[a_{2k}, a_{2k+1}].
  \]
The diagonal map sends it to a $(k, k)$-bisimplex given by extending the previous object of the category $N_k\Cobfe^{\delta}(\theta_m)$ by $k$ copies of the identity morphism in the $*$-simplicial direction.
Under the map $t^m_{\epsilon}$, it is mapped to a $(k, k)$-bisimplex, whose $*$-simplicial direction is given by $k$ copies of the identity morphism at the object in $w S_{k} \mathcal{R}^{hf}(p_m)$ defined by
  \[
  \mathrm{Ar}[k] \to \mathcal{R}^{hf}(p_m)
  \]
  \[
  (i \leq j) \mapsto E[a_{k+i+1}, a_{k+j+1}] \cup_{E(a_{k+i+1})} \Xm.
  \]
Here the notation $E[a,b]$ indicates the composite of the cobordisms between reference points $a$ and $b$, and we write $\Xm = X \times \Delta^m$ for brevity. 
  
 Let us now track the image of the $k$-simplex from  $\sd N_{*}\Cobfe^{\delta}(\theta_m)$ under the second path in the diagram. The image under the map $(f_\varepsilon^m)_*$ of a $k$-simplex that is determined by composable morphisms/bundles of $\theta_m$-cobordisms:
   \[
    E[a_0, a_1],~ E[a_1, a_2], ~\ldots~ ,  ~E[a_{2k}, a_{2k+1}]
  \]  
 is given by the $(k,k)$-bisimplex in $N_{k} T_k \Mnfld^{\theta_m}$: 
 \[
\begin{tikzcd}[column sep=tiny, row sep=tiny]
L (E[a_k, a_{k+1}]) \arrow[rr, hook]\arrow[dd, hook] && L(E[a_k, a_{k+2}]) \arrow[rr, hook]\arrow[dd, hook] &&\cdots\arrow[rr, hook] && L(E[a_{k}, a_{2k+1}]) \arrow[dd, hook]\\
& \square && \square && \square\\
L(E[a_{k-1}, a_{k+1}]) \arrow[rr, hook]\arrow[dd, hook] && L(E[a_{k-1}, a_{k+2}]) \arrow[rr, hook]\arrow[dd, hook] &&\cdots\arrow[rr, hook] && L(E[a_{k-1}, a_{2k+1}]) \arrow[dd, hook]\\
& \square &  & \square  & & \square \\
\vdots\arrow[dd, hook] && \vdots\arrow[dd, hook] && \ddots &&\vdots\arrow[dd, hook]\\
& \square & & \square  & & \square \\
L(E[a_{0}, a_{k+1}]) \arrow[rr, hook] && L(E[a_0, a_{k+2}]) \arrow[rr, hook] &&\cdots\arrow[rr, hook] && L(E[a_0, a_{2k+1}]).
\end{tikzcd}
\]
In other words, it has $L (E[a_k, a_{k+1}])$ in the upper left corner, where $L$ is the enlargement map that adds cylinders of length one to the boundaries (\autoref{L}), and successively includes horizontally the pieces to the right of the original sequence of morphisms, and vertically the pieces to the left of the sequence.
Under the map $g^m_{*, \dotp}$ this square is mapped to a $(k,k)$-bisimplex in $N_{*} (w T_{\dotp}^{+}\mathcal{R}^{hf}(p_m))$ given by the square with $(i,j)$-entry being $L(E[a_{k-i}, a_{k+j}]) \sqcup \Xm$, and with the choices of quotients given by the corresponding quotient spaces. Lastly, under the quotient map, this square diagram is mapped to the quotient by the first column diagram. To describe this more explicitly, let $\Lr$ denote the operation of an \textit{enlargement on the right}, defined similarly as in \autoref{L}, but with cylinders of length $1$ being attached only to the outgoing boundary of a cobordism. Then the first row of the resulting diagram in $wS_k \mathcal{R}^{hf}(p_m)$ is given by

 \[
\begin{tikzcd}[column sep=tiny, row sep=tiny]
\Lr (E[a_{k+1}+\epsilon, a_{k+2}]) \cup_{E(a_{k+1}+\epsilon)} \Xm \arrow[rr, hook] && \Lr (E[a_{k+1} +\epsilon, a_{k+3}]) \cup_{E(a_{k+1}+\epsilon)} \Xm \ar[rr, hook] && ... \\
{\color{white}...................................................} ... \ar[rr, hook] &&   \Lr (E[a_{k+1} + \epsilon, a_{2k+1}]) \cup_{E(a_{k+1} +\epsilon)} \Xm 
\end{tikzcd}
\]
since the following diagram is a pushout
\[
\begin{tikzcd}
L(E[a_k, a_{k+1}]) \sqcup \Xm \arrow[r, hook] \arrow[d] & L(E[a_k, a_{k+i}]) \sqcup \Xm \arrow[d]\\
\Xm \arrow[r, hook] & \Lr (E[a_{k+1} +\epsilon, a_{k+i}]) \cup_{E(a_{k+1}+\epsilon)} \Xm.
\end{tikzcd}
\]
Similarly, the resulting object in the $\bullet$-direction is the diagram of retractive spaces 
  \[
  \mathrm{Ar}[k] \to \mathcal{R}^{hf}(p_m)
  \]
  \[
  (i \leq j) \mapsto \Lr (E[a_{k+i+1} + \epsilon , a_{k+j+1}]) \cup_{E(a_{k+i+1} + \epsilon)} \Xm.
  \]
and we note that the induced vertical maps (in the $\ast$-direction) become identities after taking the quotient by the first column. 
  
Thus, we have identified the images of a given $k$-simplex under the two composite maps, and we observe that these are related by a natural weak equivalence. This yields a natural simplicial homotopy and completes the proof that the diagram from \autoref{simplicialdiag} commutes up to a preferred homotopy. Upon passing to the colimits as $\epsilon\to 0$, we obtain the desired homotopy commutative diagram, depicted informally as follows:
    \[ \begin{tikzcd}
	\Omega B \Cobf(\theta) \arrow{r}{f(\theta)} \arrow{d}[swap]{\text{(subdivision)} \ \simeq } & K^\square(\Mtheta_\Delta) \arrow{r}{g(\theta)} & A_{\Delta}(p) \arrow{d}{(T^+ \to S) \ \simeq}\\
	\Omega B\Cobf(\theta) \arrow{rr}{\tau(\theta)} && A_{\Delta}(p).
	\end{tikzcd}
	\]
  \end{proof}

\begin{rem}
Given $\theta = (\ast, X, \xi \colon V \to X)$, we do not know whether the map $g(\theta) \colon K^\square(\Mtheta_{\Delta}) \xrightarrow{g(\theta)} A(X)$ factors through the unit map $\eta_X \colon Q(X_+) \to A(X)$. 
The corresponding factorization for the map $\tau(\theta)$ was shown in \cite{RS14, RS17} (see also \cite{RS20}) and was used to obtain a new proof of a refined bivariant version of the Dwyer-Weiss-Williams index theorem.  
\end{rem}

\section{Comparing the $K$-theory of manifolds and the cobordism category}\label{sec: pi_0 computation}

In this section, we specialize to the comparison map 
$$\pi_0 \Omega B\Cobf(\theta) \to K^\square_0(\Mtheta_\Delta)$$
induced by $f(\theta)$ (\autoref{cobmapsec}) for the specific $\theta$-structure from  \autoref{orientedex}, namely the case of $d$-dimensional oriented manifolds. 

\begin{notation}
    In this case, we will use the usual notation $\Cob_d$ and $\Cobf_d$ for the simplicial categories of $d$-dimensional oriented cobordisms, and oriented cobordisms with free boundary, respectively, and we will use the notation $\Mnfld^{\partial, d}$ and $K^\square(\Mnfld^{\partial,d}_\Delta)$ for the squares category and the scissors congruence $K$-theory of $d$-dimensional oriented smooth compact manifolds, respectively (i.e., the simplicial thickening of $K^\square(\Mnfldbd_d)$ from \cite{WIT}). We will also write $f_d \colon \Omega B\Cobf_d\to K^\square(\Mnfld^{\partial,d}_\Delta)$ for the map constructed in  \autoref{cobmapsec}.
\end{notation}

The group $\pi_0\Omega B\Cob_d=\pi_1 B\Cob_d$ is isomorphic to the controllable scissors congruence group of closed oriented $d$-manifolds, $SKK_d$, recalled in \autoref{def: SKK} below. On the other hand, as shown in \cite[Theorem 6.2]{HRS24} (or as a particular case of \autoref{K0isSK}), $\pi_0K^\square(\Mnfld^{\partial,d}_\Delta)\cong SK_d^\partial$ is the scissors congruence group of $d$-manifolds with boundary. So we cannot expect the map $\Omega B\Cob_d\to K^\square(\Mnfld^{\partial,d}_\Delta)$, defined by \autoref{map: precomposition of f with Bcob}, to be an equivalence. However, we ask the question of whether the map we constructed on the larger category 
of cobordisms with free boundaries (and its variations for different $\theta$-structures) is a homotopy equivalence. 

\begin{quest}
Is the map $f_d \colon \Omega B\Cobf_d\to K^\square(\Mnfld^{\partial,d}_\Delta)$ (and its parametrized versions for different $\theta$-structures) a homotopy equivalence?
\end{quest}

As a first step towards a positive answer to this question, in this section we carry out the computation of $\pi_0$ for the oriented case, showing that we indeed get an isomorphism in this case. The unoriented case also follows similarly. 

\begin{thm}\label{mainpi0thm}
The map $f_d \colon \Omega B\Cobf_d\to K^\square(\Mnfld^{\partial,d}_\Delta)$ induces a $\pi_0$-isomorphism.
\end{thm}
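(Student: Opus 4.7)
The strategy is to construct a group homomorphism $\Psi \colon SK_n^\partial \to \pi_0 \Omega B\Cobf_n$ and to verify it is inverse to $\pi_0$ of the theorem's map, invoking the identification $\pi_0 K^\square(\Mnfldbd_n) \cong SK_n^\partial$ from \autoref{K0isSK}. On generators, $\Psi$ sends a compact oriented $n$-manifold $W$ with boundary to the homotopy class of the loop in $B\Cobf_n$ based at $\emptyset$ represented by $W$ viewed as a morphism $\emptyset \to \emptyset$ in $\Cobf^\delta_n$, with $\partial W$ treated as free boundary. Additivity under disjoint union is built into the symmetric monoidal structure, and the composition $SK_n^\partial \xrightarrow{\Psi} \pi_0 \Omega B\Cobf_n \to SK_n^\partial$ equals the identity because the map of \autoref{section: map from cob} sends the loop $W$ to the enlargement $L(W)$, which represents the same class as $W$ in $SK_n^\partial$.

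The central step is verifying that $\Psi$ descends to $SK_n^\partial$, namely that the cut-and-paste relation $[M_1 \cup_\phi \bar{M_2}] = [M_1 \cup_\psi \bar{M_2}]$ holds in $\pi_0 \Omega B\Cobf_n$ for any two orientation-preserving gluings $\phi, \psi \colon \Sigma \to \Sigma'$. The plan is to factor both loops in the fundamental groupoid of $B\Cobf_n$ through $\Sigma'$, obtaining $[W] = [\bar{M_2}]_{\phi} \circ [M_1]$ and $[W'] = [\bar{M_2}]_{\psi} \circ [M_1]$, where the subscripts record the boundary parametrization. Their difference reduces to a loop at $\Sigma'$ in the fundamental groupoid, which we trivialize by exhibiting suitable higher simplices in the fat realization of $B\Cobf^\delta(\theta_\sbt)$; the essential point is that the free-boundary structure of $\Cobf_n$ permits us to realize an arbitrary element of $\pi_0 \Diff^+(\Sigma')$ by a cobordism which becomes null-homotopic. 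This flexibility is specific to $\Cobf_n$ and fails in $\Cob_n$, consistent with $\pi_1 B\Cob_n \cong \SKK_n$ being a refinement of $SK_n$ that detects the controlled relations.

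For surjectivity of $\Psi$ (equivalently, injectivity of the induced map on $\pi_0$), a standard simplicial-approximation argument on the fat realization of $B\Cobf^\delta(\theta_\sbt)$ shows that every class in $\pi_1 B\Cobf_n$ is represented by a loop coming from a single morphism $\emptyset \to \emptyset$ in $\Cobf^\delta_n$, which is exactly a compact oriented $n$-manifold with boundary. The main obstacle is the construction of the simplices witnessing the cut-and-paste relation: one must produce explicit $\theta_k$-bundles of cobordisms in $\Cobf^\delta(\theta_k)$ satisfying the embedding, collaring, and tangential-structure constraints of \autoref{def: category Cobf(theta)}, and verify that allowing free boundary supplies enough geometric flexibility to realize every orientation-preserving self-diffeomorphism of $\Sigma'$ by a null-homotopic cobordism in $\Cobf_n$.
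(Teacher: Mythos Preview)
Your approach is genuinely different from the paper's, and the core step is not justified as written.

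The paper does not attempt to verify the $SK$-relation directly in $\pi_1 B\Cobf_n$. Instead it establishes a Genauer-type homotopy fiber sequence
\[
B\Cob_n \to B\Cobf_n \xrightarrow{B\partial} B\End_{n-1}^{\varnothing},
\]
uses the known identification $\pi_1 B\Cob_n \cong \SKK_n$, and then analyzes the long exact sequence. The substantive work is computing the image of the connecting map $\alpha\colon \pi_2 B\End_{n-1}^{\varnothing}\to \SKK_n$ to be exactly the subgroup $T_n$ generated by mapping tori (via a Pontryagin--Thom argument and the group-completion theorem), together with an independent algebraic proof that $\SKK_n/T_n\cong \SK_n$. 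The result then follows from the known splitting $\SK_n^{\partial}\cong \SK_n\oplus C_{n-1}$.

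Your proposal hides exactly this difficulty. The claim that ``the free-boundary structure of $\Cobf_n$ permits us to realize an arbitrary element of $\pi_0\Diff^+(\Sigma')$ by a cobordism which becomes null-homotopic'' is precisely the statement that mapping tori die in $\pi_1 B\Cobf_n$, and you have given no argument for it. This is not a routine simplex-construction: the loop at $\Sigma'$ determined by $\psi^{-1}\phi$ lives in $\pi_1$ of the object space, and producing a nullhomotopy in $B\Cobf_n$ amounts to showing that the image of $\pi_1 B\Diff(\Sigma')$ in $\pi_1 B\Cobf_n$ is zero. In $B\Cob_n$ this image is exactly the mapping-torus class in $\SKK_n$, which is generally nonzero (e.g.\ detected by the Kervaire semicharacteristic when $n\equiv 1\pmod 4$); proving it vanishes after passing to $\Cobf_n$ is the entire content of the theorem, not a formal consequence of allowing free boundary. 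If you want to push your direct approach through, you must supply an explicit nullhomotopy of $[T_f]$ in $B\Cobf_n$ for every $f\in\Diff^+(\Sigma)$, and explain why the corresponding argument fails in $B\Cob_n$. Absent that, your ``central step'' is a restatement of the theorem rather than a proof.

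Your surjectivity sketch and the verification that the composite $SK_n^\partial\to \pi_1 B\Cobf_n \to SK_n^\partial$ is the identity are reasonable; the paper carries out the latter in essentially the same way.
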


The proof of \autoref{mainpi0thm} will occupy the remainder of the section and will be divided into different subsections.

\subsection{The map from $SKK$ to $SK$} The computation of $\pi_0 \Omega B\Cobf_d=\pi_1 B\Cobf_d$ will build on classical results about $SK$ and $SKK$ groups, and will crucially use the quotient map $SKK_d\to SK_d$. After reviewing the definition of the latter and collecting the relevant short exact sequences from \cite{KKNO}, we compute the kernel of the map $SKK_d\to SK_d$. This result is essential for our computation of $\pi_1$ of the cobordism category with free boundaries.

Classically, there is a more refined relation than that of cutting and pasting, called $SKK$ (``scheiden und kleben, kontrollierbar"=``controllable cutting and pasting") in which we keep track of the gluing diffeomorphisms.

\begin{defn} \label{def: SKK} The group $SKK_d$ is the quotient of the group completion of the monoid of diffeomorphism classes of closed, oriented $d$-manifolds under disjoint union, by the $\SKK$-equivalence relation that is generated by
\[ [M_1 \cup_{\phi} \bar{M'_1}] \;  - \;  [M_1 \cup_{\psi} \bar{M'_1}] \; \;  \sim \;  \;  [M_2 \cup_{\phi}\bar{M'_2}] \; -\; [M_2 \cup_{\psi}\bar{M'_2}],\]
 \noindent where $M_1, M_1'$ and $M_2, M_2'$ are compact oriented manifolds such that $\partial M_1=\partial M_2$ and $\partial M_1'=\partial M_2'$, and $\phi, \psi \colon \partial M_1 \to \partial M_1'$ orientation preserving diffeomorphisms.
\end{defn}

The $\SKK$-groups have been interpreted as Reinhardt vector field bordism groups \cite{KKNO} and have also been shown to be isomorphic to 
$\pi_0$ of the Madsen-Tillman spectrum $MTSO(d)$, or equivalently to $\pi_1(B\Cob_d)$  (see, e.g., \cite{Ebert}, \cite{bokstedt2014geometric}).  

The $SKK$-invariants for closed orientable manifolds are completely classified in \cite[\S 4]{KKNO}: they are given by the Euler characteristic, the Kervaire semicharacteristic and the cobordism class. Therefore, in the group $SKK_d$, two classes $[M]_{SKK}=[N]_{SKK}$ are equal exactly when $M$ and $N$ have the same Euler characteristic, semicharacteristic and cobordism class. Note that the $SKK$-equivalence relation is finer than the $SK$-relation, and hence there is a natural quotient map 
\[q \colon SKK_d\to SK_d,\] 
which sends $[M]_{SKK}$ to $[M]_{SK}$. Note that $SK_d$ here denotes the cut-and-paste group of \emph{closed} oriented $d$-manifolds.

We further recall the fundamental short exact sequences established in \cite{KKNO}. Let $\SKbar_d$ be the quotient of $\SK_d$ by the cobordism relation, and let $\Omega_d$ be the cobordism group of $d$-dimensional closed oriented manifolds. 

\begin{thm}\cite[Theorems 1.1, 1.2, 4.2]{KKNO} \label{thm: SES}
\leavevmode
 \begin{enumerate}[label=(\roman*), ref=\thethm(\roman*)]
     \item \label{SES1} Let $I_d$ denote the subgroup of $\SK_d$ generated by the sphere $S^d$. Then there is a split short exact sequence 
     \[
     0 \to I_d  \to \SK_d \to \SKbar_d \to 0 
     \ \ \ \text{and} \ \ \ \ \ 
 I_d \cong \begin{cases}
   \Z, & ~d \text{~even}\\
   0, & ~d \text{~odd}.\\
\end{cases} 
      \]
 \item \label{SES3} Let $J_d$ denote the subgroup of $\SKK_d$ generated by the sphere $S^d$. Then there is a split short exact sequence
 \[
 0 \to J_d \to \SKK_d \to \Omega_d \to 0
 \ \ \ \text{and} \ \ \ \ \ 
J_d \cong \begin{cases}
   \Z, & ~d \text{~even}\\
   \Z/2\Z,& \text{$d$ = 1 mod 4}\\
   0, & \text{$d$ = 3 mod 4.}   
\end{cases}
 \]
For $d=4k+1$ the splitting is given by the Kervaire semicharacteristic $\kappa$.
 \item \label{SES2} Let $F_d$ denote the subgroup of $\Omega_d$ generated by all mapping tori. Then there is a short exact sequence 
 \[
 0 \to F_d  \to \Omega_d \to \SKbar_d \to 0.
 \]
 \end{enumerate}
\end{thm}

We use these short exact sequences to describe the kernel of the quotient homomorphism $q\colon \SKK_d \to \SK_d$.

\begin{prop}\label{SKtoSKK}
Let $T_d$ denote the subgroup of $\SKK_d$ generated by all mapping tori. Then there is a short exact sequence
    \[
0 \to T_d \to \SKK_d \xrightarrow{q} \SK_d  
\to 0.
\]
\end{prop}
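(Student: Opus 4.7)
Surjectivity of $q\colon \SKK_n \to \SK_n$ is tautological since it sends $[M]_{\SKK}$ to $[M]_{\SK}$. To identify the kernel I establish the two inclusions $T_n \subseteq \ker q$ and $\ker q \subseteq T_n$ separately.

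For $T_n \subseteq \ker q$, I show $[M_\phi]_{\SK} = 0$ for any mapping torus $M_\phi$ of a diffeomorphism $\phi\colon \Sigma \to \Sigma$. Cutting $M_\phi$ along a fiber and regluing by the identity realizes an $\SK$-equivalence $M_\phi \sim \Sigma \times S^1$, so $[M_\phi]_{\SK} = [\Sigma \times S^1]_{\SK}$. Since $\Sigma \times S^1 = \partial(\Sigma \times D^2)$ is null-cobordant and the two compositions $\SKK_n \to \SK_n \to \SKbar_n$ and $\SKK_n \to \Omega_n \to \SKbar_n$ agree, the class $[\Sigma \times S^1]_{\SKbar}$ vanishes, and \eqref{SES1} places $[\Sigma \times S^1]_{\SK}$ in $I_n$. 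For $n$ odd this is zero because $I_n = 0$, and for $n$ even it is zero because Euler characteristic is an $\SK$-invariant detecting $I_n \cong \Z$ (with $\chi(S^n)=2$) while $\chi(\Sigma \times S^1)=0$.

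For $\ker q \subseteq T_n$ I apply the snake lemma to the commutative diagram with exact rows \eqref{SES3} and \eqref{SES1},
\[
\begin{tikzcd}
0 \arrow[r] & J_n \arrow[r] \arrow[d, "\alpha"] & \SKK_n \arrow[r, "\pi"] \arrow[d, "q"] & \Omega_n \arrow[r] \arrow[d, "\beta"] & 0 \\
0 \arrow[r] & I_n \arrow[r] & \SK_n \arrow[r] & \SKbar_n \arrow[r] & 0
\end{tikzcd}
\]
where $\alpha([S^n]_{\SKK})=[S^n]_{\SK}$ and $\beta$ is the quotient map from \eqref{SES2}. The map $\alpha$ is an isomorphism for $n$ even and has zero cokernel for $n$ odd (since $I_n=0$), so the snake lemma yields the short exact sequence $0 \to \ker\alpha \to \ker q \to F_n \to 0$. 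The group $T_n$ fits into a parallel short exact sequence $0 \to T_n \cap J_n \to T_n \to F_n \to 0$, where surjectivity onto $F_n$ is built into the definition of $F_n$ as the subgroup of $\Omega_n$ generated by mapping tori. The inclusion $T_n \hookrightarrow \ker q$ is compatible with these sequences, so by the five lemma it suffices to prove $T_n \cap J_n = \ker\alpha$.

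The identity $T_n \cap J_n = \ker\alpha$ is immediate for $n$ even (both sides are zero: $\ker\alpha = 0$ because $\alpha$ is an isomorphism, and $T_n \cap J_n = 0$ because $\chi$ is injective on $J_n \cong \Z$ and vanishes on every mapping torus) and for $n \equiv 3 \pmod 4$ (both sides vanish since $J_n = 0$). The delicate case is $n \equiv 1 \pmod 4$, where $\ker\alpha = J_n = \Z/2$ is generated by $[S^n]_{\SKK}$, and I must geometrically realize this class as a mapping torus. I propose using $\CP^{(n-1)/2} \times S^1$, the mapping torus of the identity on $\CP^{(n-1)/2}$: it is null-cobordant because it bounds $\CP^{(n-1)/2} \times D^2$, so its class lies in $J_n$, and a Künneth computation shows all of its rational Betti numbers equal $1$, whence its mod-$2$ semicharacteristic equals $(n+1)/2 \equiv 1 \pmod 2$. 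Since the semicharacteristic detects the nontrivial element of $J_n = \Z/2$ for $n \equiv 1 \pmod 4$, this forces $[\CP^{(n-1)/2} \times S^1]_{\SKK} = [S^n]_{\SKK}$ and concludes the argument. The main obstacle is precisely this last case, where the sphere's $\SKK$-class has to be realized by a bona fide mapping torus; once that is done, everything else is a diagram chase.
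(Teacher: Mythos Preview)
Your proof is correct and follows essentially the same route as the paper. The paper phrases the comparison as a map of short exact sequences from the quotient of \eqref{SES3} by $T_n$ to \eqref{SES1} and invokes the five lemma, whereas you run the snake lemma first and then compare $T_n$ with $\ker q$; these are equivalent reformulations, and the decisive geometric input---realizing the generator of $J_n$ for $n\equiv 1\pmod 4$ by the mapping torus $\CP^{(n-1)/2}\times S^1$ via its semicharacteristic---is identical in both.
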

\begin{proof}
Note that $T_d$ is  contained in the kernel of $q$ since for any mapping torus both the Euler characteristic and the signature vanish.  
From \autoref{thm: SES}(ii), we obtain a short exact sequence
 \[
 0 \to J_d / (J_d \cap T_d) \to \SKK_d/ T_d \to \Omega_d /F_d \to 0.
 \]
Since $q$ is trivial on $T_d$, it descends to an induced homomorphism
 \[
 \bar{q} \colon \SKK_d/T_d \to \SK_d, 
 \]
 whose restriction to the subgroup $J_d / (J_d \cap T_d)$ yields a homomorphism
 \[
 \tilde{q} \colon J_d / (J_d \cap T_d) \to I_d.
 \]
In this way, we obtain a map of short exact sequences (see \autoref{thm: SES}(i))
\[ 
\begin{tikzcd}
0 \ar[r] & J_d / (J_d \cap T_d) \ar[r] \ar{d}{\tilde{q}} & \SKK_d/ T_d \ar[r] \ar{d}{\bar{q}} & \Omega_d /F_d \ar[r] \ar{d}{\cong} & 0\\
 0 \ar[r] & I_d  \ar[r] & \SK_d \ar[r] & \SKbar_d \ar[r] & 0,
	\end{tikzcd}
    \]
\noindent where the isomorphism on the right follows from \autoref{thm: SES}(iii). 

We claim that the map $\tilde{q} \colon J_d / (J_d \cap T_d) \to I_d$ is an isomorphism. When $d \neq 4k+1$, the computation of the groups $I_d$ and $J_d$ from \autoref{thm: SES} implies that the quotient map $J_d \to I_d$ is an isomorphism, and hence $\tilde{q}$ is so, too. For $d=4k+1$ observe that $\CP^{2k} \times S^1$ is a mapping torus in a trivial way and hence reperesents an element in $T_d$. By the K\"unneth theorem $b_i(\CP^{2k} \times S^1)=b_i(\CP^{2k})+b_{i-1}(\CP^{2k})$ and therefore the semicharacteristic is $\kappa(\CP^{2k} \times S^1)=1 \in \Z/2$. Since $S^d$ also has semicharacteristic $1$ and its cobordism class is $0$ we conclude that $S^d$ is $SKK$-equivalent to $\CP^{2k} \times S^1$, which implies $J_d/(J_d \cap T_d)=0.$ Therefore, for $d=4k+1$ the map $\tilde{q}$ is an isomorphism, which concludes the proof.
 \end{proof} 

\subsection{Computation of $\pi_0\Omega B\Cobf_d$} We start by computing $\pi_0 \Omega B\Cobf_d$, and we then show that this is indeed isomorphic to the scissors congruence group via the map to the $K$-theory of manifolds that we constructed in \autoref{cobmapsec}.

\begin{prop}\label{isom}
   There is an isomorphism $\pi_0 \Omega B\Cobf_d \cong \SK^{\partial}_d$.
\end{prop}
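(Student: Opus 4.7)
The plan is to prove \autoref{isom} by constructing a short exact sequence for $\pi_1 B\Cobf_n$ parallel to the split short exact sequence
\[
0 \to SK_n \to SK_n^\partial \to \mathrm{Gr}(\mathcal{M}^{nb}_{n-1}) \to 0
\]
from \cite[Theorem~2.10]{WIT} and then applying the five lemma. The comparison map $\pi_1 B\Cobf_n \to SK_n^\partial$ is obtained by taking $\pi_0$ of the map $\Omega B\Cobf_n \to K^\square(\Mnfldbd_n)$ constructed in \autoref{section: map from cob}, combined with the identification $K_0(\Mnfldbd_n) \cong SK_n^\partial$ from \autoref{K0isSK} applied to the oriented $\theta$-structure of \autoref{orientedex}.

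First I would define a boundary homomorphism $\partial \colon \pi_1 B\Cobf_n \to \mathrm{Gr}(\mathcal{M}^{nb}_{n-1})$ sending a loop $\emptyset \to M_1 \to \cdots \to \emptyset$ in $\Cobf_n$ to the class $[\partial W]$ of the boundary of its composite $W \colon \emptyset \to \emptyset$. Well-definedness on $\pi_1$ is straightforward, since the $2$-simplex relations in the nerve of $\Cobf_n$ (composition along closed $(n-1)$-manifolds and disjoint union) preserve free boundaries, and $\partial W$ is nullbordant because it bounds $W$. Surjectivity is immediate. For the kernel, a class with $[\partial W] = 0$ in the Grothendieck group satisfies $\partial W \sqcup N' \cong N'$ for some nullbordant $N'$; choosing a nullbordism $W_{N'}$ of $N'$ and gluing it to $W$ in two different ways produces a closed manifold whose class in $\pi_1 B\Cobf_n$ differs from $[W]$ by something in the image of $\pi_1 B\Cob_n = SKK_n$. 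This yields an exact sequence
\[
SKK_n \to \pi_1 B\Cobf_n \xrightarrow{\partial} \mathrm{Gr}(\mathcal{M}^{nb}_{n-1}) \to 0
\]
mapping compatibly into the sequence from \cite[Theorem~2.10]{WIT}.

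The key step is to show that the first map factors through the quotient $q \colon SKK_n \to SK_n$ of \autoref{SKtoSKK}, producing a short exact sequence
\[
0 \to SK_n \to \pi_1 B\Cobf_n \xrightarrow{\partial} \mathrm{Gr}(\mathcal{M}^{nb}_{n-1}) \to 0.
\]
Since $\ker q = T_n$ is generated by mapping tori, this reduces to showing $[T_\phi] = 0$ in $\pi_1 B\Cobf_n$ for every diffeomorphism $\phi$ of a closed $(n-1)$-manifold. Injectivity of the resulting map $SK_n \hookrightarrow \pi_1 B\Cobf_n$ will then follow because its composition with $\pi_1 B\Cobf_n \to SK_n^\partial$ is the natural inclusion $SK_n \hookrightarrow SK_n^\partial$ from \cite[Theorem~2.10]{WIT}. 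With this in hand, the five lemma applied to the commutative diagram of short exact sequences with outer vertical maps the identity gives the isomorphism $\pi_1 B\Cobf_n \cong SK_n^\partial$.

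The main obstacle is the vanishing of mapping tori in $\pi_1 B\Cobf_n$, since $[T_\phi] \in SKK_n$ is genuinely nontrivial in general. The idea is to exploit the extra $2$-simplices in the nerve $N\Cobf_n$ coming from morphisms with free boundary, which are unavailable in $N\Cob_n$, to realize an explicit nullhomotopy of the loop $T_\phi$. Concretely, decomposing $T_\phi = W_+ \cup_{(\mathrm{id},\phi)} W_-$ as a gluing of two half-cylinders $M \times [0,1/2]$ along two copies of $M$, one constructs a zigzag of composition triangles in $N\Cobf_n$ involving cobordisms with free boundary that reroutes the gluing identification $(\mathrm{id},\phi)$ so as to trivialize the resulting loop. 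This is the geometric manifestation at the $2$-cell level of $B\Cobf_n$ of the passage from $SKK$-equivalence to $SK$-equivalence encoded in \autoref{SKtoSKK}.
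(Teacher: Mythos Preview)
Your overall strategy coincides with the paper's: both produce a split short exact sequence
\[
0 \to SK_n \to \pi_1 B\Cobf_n \to C_{n-1} \to 0
\]
parallel to the one for $SK_n^\partial$ from \cite[Theorem~2.10]{WIT}, and both identify the left-hand term as $SKK_n$ modulo the subgroup $T_n$ generated by mapping tori (\autoref{SKtoSKK}). The difference lies in how this sequence and the identification of the kernel are obtained.

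The paper does not build the sequence by hand. It invokes Steimle's pullback theorem \cite[Theorem~2.3]{Steimle21} to obtain a genuine homotopy fiber sequence
\[
B\Cob_n \to B\Cobf_n \xrightarrow{B\partial} B\End_{n-1}^{\varnothing},
\]
and then reads off the desired exactness from the long exact sequence of homotopy groups. In particular, the equality $\ker\big(SKK_n \to \pi_1 B\Cobf_n\big)=\im\alpha$ for the connecting map $\alpha\colon \pi_2 B\End_{n-1}^{\varnothing}\to \pi_1 B\Cob_n$ is automatic, and the remaining content---that $\im\alpha=T_n$---is isolated as \autoref{alphaim}. That lemma is proved using the Pontryagin--Thom description of $\lambda\colon B\Diff(M)\to\Omega B\Cob_d$ together with the group completion theorem applied to the monoid $\bigsqcup_N B\Diff(N)$, which forces every class in $\pi_2 B\End_{n-1}^{\varnothing}$ to come from some $\pi_1 B\Diff(N)$.

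Your proposal replaces this machinery with a direct construction of $\partial$ and an explicit nullhomotopy of $[T_\phi]$ via ``a zigzag of composition triangles in $N\Cobf_n$.'' This is where the real gap lies. The nullhomotopy of the loop $T_\phi$ in $B\Cobf_n$ does not come from discrete $2$-simplices of the nerve; it comes from the simplicial thickening direction---concretely, from the loop in $B\Diff(M)\subset \End_{n-1}^{\varnothing}$ determined by $\phi$, which is exactly what the connecting map $\alpha$ encodes. A purely combinatorial ``rerouting of the gluing identification $(\id,\phi)$'' in the embedded cobordism category does not exist in general: in the embedded model the boundary parametrization is not extra data one can swap, and there is no $2$-simplex in $N_2\Cobf_n^\delta$ witnessing $T_\phi \sim T_{\id}$ unless $\phi$ is isotopic to the identity. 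Your final paragraph is therefore not a proof sketch but a restatement of what needs to be shown. Likewise, without the fiber sequence your exactness claim at $\pi_1 B\Cobf_n$ (that every class with trivial free boundary lies in the image of $SKK_n$) is asserted rather than argued; the gluing-with-$W_{N'}$ maneuver you describe shows the class is represented by a \emph{closed} manifold, but identifying that loop with one coming from $B\Cob_n$ again uses the fiber sequence (or an equivalent amount of work).

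In short: the skeleton is right, but the two load-bearing steps---vanishing of mapping tori and exactness in the middle---are precisely what the paper's fiber-sequence argument and \autoref{alphaim} supply, and your proposal does not provide a substitute for either.
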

\begin{proof}
Let $\Cob_{d}$ denote the subcategory of $\Cobf_d$ with the same objects and morphisms without free boundary, and $\End_{d-1}^{\varnothing}$ its full subcategory on one object, the empty manifold.  Note that the following diagram is a pullback square of (weakly unital) locally fibrant simplicial categories and (weakly unital) functors
\[ 
\begin{tikzcd}
\Cobf_{d}  \arrow[r,  "\partial"]\arrow[d] & \End_{d-1}^{\varnothing}
\arrow{d}{} \\
\Cob_{d}^{\partial} \arrow{r}{\partial}  & \Cob_{d-1}
	\end{tikzcd}
    \]
where $\partial$ is defined by taking the (horizontal) boundary. By \cite[esp. Theorem 2.11 and Lemma 4.4]{Steimle21} (cf. \cite{Genauer12}), we obtain a homotopy pullback square after passing to the classifying spaces of these categories. (Note that \cite{Steimle21} 
works with the non-unital versions of these categories, but this does
not alter the homotopy types of their classifying spaces.)
Moreover, the upper row gives rise to a homotopy fiber sequence
\begin{equation} \label{seq: Cobf Genauer type sequence}
 B\Cob_d \to B\Cobf_d \xrightarrow{B\partial} B \End_{d-1}^{\varnothing},   
\end{equation}
where the first map is induced by the inclusion functor. Consider the following part of the long exact sequence of homotopy groups
\[
\pi_2 B \End_{d-1}^{\varnothing} \xrightarrow{\alpha} \underbrace{\pi_1 B\Cob_d}_{\cong \SKK_d} \to \pi_1 B\Cobf_d  \xrightarrow{\beta} \underbrace{ \pi_1 B \End_{d-1}^{\varnothing}}_{\cong \Gr(\mathcal{M}_{d-1})} \xrightarrow{\gamma} \underbrace{\pi_0 B\Cob_d}_{\cong \Omega_{d-1}},
\]
where $\Gr(\mathcal{M}_{d-1})$ denotes the Grothendieck group of the monoid $\mathcal{M}_{d-1}$ of diffeomorphism classes of oriented closed smooth $(d-1)$-dimensional manifolds, with respect to disjoint union. 

Under the identifications $\pi_1 B \End_{d-1}^{\varnothing} \cong \Gr(\mathcal{M}_{d-1})$ and $\pi_0 B\Cob_d \cong \Omega_{d-1}$, the map $\gamma$ is given by sending the diffeomorphism class of a manifold to its bordism class. Thus, the kernel of $\gamma$ is precisely the subgroup $C_{d-1}$ of nullbordant manifolds. Note that $C_{d-1}$ is a free abelian group and agrees with the image of 
$\beta$. So we obtain a split short exact sequence
\[0\to \SKK_d/ \im ~\alpha \to \pi_1 B\Cobf_d \xrightarrow{\beta} C_{d-1}\to 0. \]

By \autoref{alphaim} below (whose proof we defer to \autoref{lemmaproof}), the image of $\alpha$ is given by the subgroup $T_d$ of $SKK_d$ generated by mapping tori, and by \autoref{SKtoSKK}, we have a canonical isomorphism $\SKK_d/T_d \cong SK_d$. As a consequence, we conclude 
\[
\pi_1 B\Cobf_d \cong (\SKK_d/ \im ~\alpha) \oplus C_{d-1} \cong \SK_d \oplus C_{d-1}  \cong \SK^{\partial}_d,
\]
where the last isomorphism is shown in \cite[Theorem 2.10]{WIT}.
\end{proof}

\begin{proof}[Proof of \autoref{mainpi0thm}]
  Both $\pi_0(\Omega B\Cobf_d)$ and $K_0^\square(\Mnfld^{\partial,d}_\Delta)$ are isomophic to $\SK^{\partial}_d$. We need to show that the isomorphism from \autoref{isom} agrees with the map $\pi_0(f_d) \colon \pi_0(\Omega B\Cobf_d) \to K^{\square}_0(\Mnfld^{\partial, d}_{\Delta})$. For this, we will need to refer back to the explicit identification of $K_0$ of a squares category in terms of generators and relations from \cite{CKMZ}.
  
  An oriented compact smooth $d$-dimensional manifold $W$, viewed as a cobordism from $\varnothing$ to $\varnothing$, determines a loop in $B\Cobf_d$. If the manifold $W$ has boundary components, they are all treated as free boundary components. The loop in $B\Cobf_d$ that is determined by $W$ is identified with the class of $[W] \in \SK^{\partial}_d$ under the isomorphism $\pi_1 B\Cobf_d \cong \SK^{\partial}_d$ of \autoref{isom}. 
  
  Let us now track the image of this loop $W$ in the space $K^\square(\Mnfld^{\partial,d}_\Delta)$ under the map $f_d$. The map $f_d$ is defined on the subdivision of $N_{\dotp} \Cobf_d$. At the level of geometric realizations, a loop in $B \Cobf_d$, represented by a cobordism $W$ with incoming boundary $\varnothing$ and outgoing boundary $\varnothing$, corresponds to the following two 1-simplices in $\sd N_{\dotp} \Cobf_d$: the first 1-simplex is given by the string of three composable cobordisms
 \[
 \varnothing \varnothing W ,
\]
where $\varnothing$ denotes a formal identity, and the second 1-simplex is given by the string
 \[
  W \varnothing \varnothing.
 \]
 By the construction of the map $f_d$, these two 1-simplices are mapped to the two 1-simplices in $N^{\square}_{\dotp} \Mnfld^{\partial,d}$ represented by the following two $1 \times 1$ squares:
\[
\begin{tikzcd}
     \varnothing \arrow[r, hook] \arrow[d, equal] & L(W) \arrow[d, equal]\\
     \varnothing \arrow[r, hook] & L(W),
\end{tikzcd}
\quad
 \begin{tikzcd}
     \varnothing \arrow[r, equal] \arrow[d, hook] & \varnothing \arrow[d, hook]\\
     L(W) \arrow[r, equal] & L(W).
\end{tikzcd}
\]  
  
By the proof of \cite[Theorem 3.1]{CKMZ} that identifies $K_0$ of a squares category in terms of generators and relations, and by \cite[Theorem 4.4]{WIT} where this group was identified with $\SK^{\partial}_d$, it follows that these two squares determine a loop 
in $|N^{\square}_\dotp \Mnfld^{\partial, d}|$ representing $[L(W)] \in \SK^{\partial}_d \cong K_0^\square(\Mnfld^{\partial,d}_\Delta)$. 
The manifold $L(W)$ is isomorphic to $W$. Therefore, the image of the loop represented by the cobordism $W$ is mapped under $\pi_0(f_d)$ to 
\[
[L(W)]=[W] \in \SK^{\partial}_d \cong K_0^\square(\Mnfld^{\partial,d}_\Delta).
\]
Hence this identification together with \autoref{isom} yield  \autoref{mainpi0thm}.
\end{proof}

 It remains to prove the claim of \autoref{alphaim}, which was used in the proof of \autoref{isom}.

\subsection{The image of $\alpha$ in $SKK_d$}\label{lemmaproof}
The goal of this section is to identify the image of the homomorphism $\alpha \colon \pi_2 B \End_{d}^{\varnothing} \xrightarrow{\alpha} \SKK_{d+1}$ that was used in the proof of  \autoref{isom} (note the change of the index).

Let $M$ be a closed oriented smooth $d$-dimensional manifold. We recall that there is a canonical map (see, e.g., \cite{RS14})
\[
\lambda_M \colon B \Diff(M) \to \Omega_{\varnothing} B \Cob_{d}.
\]
Here $B \Diff(M)$ is defined as (a simplicial version of) the space $\Emb (M, \R^{\infty})/ \Diff(M)$, so a point in $B\Diff(M)$ can be viewed as a cobordism/morphism in $\Cob_d$ from $\varnothing$ to $\varnothing$. Then the map $\lambda_M$ is essentially given by sending each point in $B\Diff(M)$ to the loop in $B \Cob_{d}$ (based at $\varnothing$) that is determined by the associated endomorphism of $\varnothing$. Note that the map $\lambda_M$ factors through $\Omega_{\varnothing} B \End^{\varnothing}_{d}.$

We also review the (parametrized) Pontryagin-Thom construction that gives a description of the composite map
$$B\Diff(M) \xrightarrow{\lambda_M} \Omega B\Cob_d \xrightarrow{\sim} \Omega^{\infty} MTSO(d)$$ (see \cite{GMTW, ebert2013vanishing, RS14}). 
	
Let $B_n(M) \coloneqq \Emb(M, \R^{d-1+n})/ \Diff(M)$ and $E_n(M) \coloneqq \Emb(M, \R^{d-1+n}) \times_{\Diff(M)} M$. The projection $p_n \colon E_n(M) \to B_n(M)$ defines a smooth oriented $M$-bundle and there is a natural fiberwise embedding:
\[ \begin{tikzcd}
E_n(M)  \arrow[hookrightarrow]{r}{j_n}
\arrow{d}[swap]{p_n} & B_n(M) \times \R^{d+n} \arrow{ld}{proj}\\
B_n(M). & 
\end{tikzcd}
\]
Denote by $\nu(p_n)=-T_{p_n} E_n(M)$ the fiberwise normal bundle of the embedding $j_n$. Recall that $T_{p_n} E_n(M)$ is the vertical tangent bundle of $p_n$,
\[
T_{p_n} E_n(M) = \Emb(M, \R^{d-1+n}) \times_{\Diff(M)} TM \to \Emb(M, \R^{d-1+n}) \times_{\Diff(M)} M=E_n(M).
\] 
Let $U$ be a tubular neighbourhood of the image of $j_n$.  The Pontryagin-Thom collapse map
\[
(B_n(M))_{+} \wedge S^{n+d} \to \Th(\nu(p_n))
\]
collapses everything outside $U$ to the base point. Let $\gamma_{d,n} \to \Gr_d(\R^{d+n})$ be the tautological $d$-dimensional vector bundle over the Grassmanian of oriented $d$-dimensional linear subspaces in $\R^{d+n}$ and let $(-\gamma_{d,n})$ be its $n$-dimensional complement. 
Composition with the classifying map of the normal bundle $\nu(p_n)$
\[
\Th(\nu(p_n))=\Th(-T_{p_n} E_n(M)) \to \Th(-\gamma_{d,n})=\MTSO(d)_{n+d}
\]
yields a composite map
\[
\Sigma^{n+d} (B_n(M))_{+} \to \Th(\nu(p_n)) \to \MTSO(d)_{n+d}.
\]
Letting $n \to \infty$ and passing to the adjoint map gives the desired map
\[
B \Diff(M)= B_{\infty}(M) \to \Omega^{\infty} \MTSO(d).
\]

\begin{lemma} \label{identify-alpha} Let $M$ be as above and consider the maps
	\[
\xymatrix{
\pi_0 \Diff(M) \cong \pi_1 B \Diff(M) \ar[r]^(0.65){\pi_1(\lambda_M)} & \pi_1 \Omega B \Cob_{d} \ar[r]^{\mathfrak{d}} \ar[d]^{\cong} & \pi_0 \Omega B \Cob_{d+1} \ar[r]^{\cong} \ar[d]^{\cong} & SKK_{d+1} \\    
 &   \pi_1 \MTSO(d) \ar[r]^(.45){\mathfrak{d}} & \pi_0 \MTSO(d+1) &
}
	\]
where $\mathfrak{d}$ is the connecting homomorphism. Then the upper composite map sends the class of $f \in \Diff(M)$ to the class of the mapping torus $[T_f]$.
\end{lemma}
\begin{proof}

Let $ f \in \Diff(M)$ and let $T_f$ be the associated mapping torus. We consider the corresponding smooth $M$-bundle $\pi_f \colon T_f \to S^1$, its classifying map $c_f \colon S^1 \to B\Diff(M)$ and a fiberwise embedding into $S^1 \times \R^{d+n}$. Note that under the identification $\pi_1 B\Diff(M) \cong \pi_0 \Diff(M)$ the loop $c_f$ exactly represents $f \in \Diff(M)$. Assuming that $n$ is large enough, there is a commutative diagram as follows
	\[ \begin{tikzcd}
	\nu_{\pi_f} \arrow[r] \arrow[d] & \nu_{p_{n}} \arrow[r] \arrow[d]& -\gamma_{d,n} \arrow[d]\\
	T_f
	\arrow{d}[swap]{\pi_f} \arrow[r] & E_{n}(M) \arrow{d}{p_n} \arrow[r] & \Gr_d(\R^{d+n})\\
	S^1 \arrow{r}{c_f} & B_{n}(M). &
	\end{tikzcd}
	\]
Then the Pontryagin-Thom construction applied to the $M$-bundle $\pi_f \colon T_f \to S^1$ produces a map 
	\[
	\Sigma^{d+n} (S^1)_{+} \to \Th(\nu_{\pi_f}).
	\]
After composing with the map to $\Th(-\gamma_{d,n})$ and letting $n \to \infty$ we obtain an element in $\pi_1(\MTSO(d))$. By the construction of the connecting homomorphism $\mathfrak{d}$, in order to identify the image of this element in $\pi_0 MTSO(d+1) \cong SKK_{d+1}$, we need to identify the pullback of the zero section $\Gr_d(\R^{n+k})$ along the composite map $\Sigma^{d+n}(S^1)_{+} \to \Th(-\gamma_{d,n})$, which is given by $T_f$:
	\[ \begin{tikzcd}
	\Th(\nu_{\pi_f}) \arrow[r] & \Th(\nu_{p_{n}}) \arrow[r] & \Th(-\gamma_{d,n}) \\
	\Sigma^{d+n} (S^1)_{+}
	\arrow{u}{PT_{\pi_f}} \arrow{r}{\Sigma^{d+n} c_f} &  \Sigma^{d+n} (B_{n})_{+} \arrow{u}{PT_{p_n}} & \\
	T_f \arrow[hookrightarrow]{u} \arrow[rr] & & \Gr_{d}(\R^{d+n}) \arrow[hookrightarrow]{uu}.
	\end{tikzcd}
	\]
This completes the identification of the upper composite map.	
\end{proof}

\begin{lemma}\label{alphaim} The image of $\alpha \colon \pi_2 B \End_d^{\varnothing} \to SKK_{d+1}$ (\autoref{isom}) is the subgroup generated by the classes of mapping tori. 
\end{lemma}
\begin{proof}
Let $\mM := \End_d^{\varnothing}=\bigsqcup_{N} B \Diff(N)
$ be the homotopy commutative monoid, where the disjoint union is over closed oriented smooth $d$-dimensional manifolds, one from each diffeomorphism class. By the group completion theorem \cite{McS_groupcompletion, RW_groupcompletion}, 
\[
H_*(\mM) [ \pi_0^{-1} ] \xrightarrow{\cong}  H_* (\Omega_{\varnothing} B \mM) 
\]
where $\pi_0=\pi_0 \mM$. The localization $H_*(\mM)[\pi_0^{-1}]$ can be expressed as the homology of the space
$$\mM_{\infty} = \mathrm{hocolim}(\mM_0  \xrightarrow{\bigsqcup N_1} \mM_1 \xrightarrow{\bigsqcup N_2} \mM_2 \to \cdots)$$
where each $\mM_i$ is $\mM$ and $N_1, N_2, \ldots$ is a sequence of closed $(d-1)$-manifolds such that for every $N$ and $j \geq 0$, there exists a $k \geq 0$ such that $N$ is a right factor of $N_{j+1} \bigsqcup N_{j+2} \bigsqcup \cdots \bigsqcup N_{j+k}$ in the discrete monoid $\pi_0(\mM)$ -- this can be arranged because $\pi_0$ is countable. For instance, a class $x \in H_1(\mM_1)$ represents the class $x \cdot N_1^{-1}$ in the localization, etc. Since
\[
H_1(\mM) = \bigoplus_{N} H_1(B\Diff(N)) \cong \bigoplus_{N} \pi_1(B\Diff(N))_{ab}  \cong \bigoplus_{N} \pi_0 \Diff(N)_{ab}
\]
it follows that every class in 
$$\pi_1 \Omega_{\varnothing} B \mM \cong  H_1 \big(\Omega_{\varnothing} B \mM\big)_0,$$
where $(-)_0$ denotes the summand corresponding to $[\varnothing]$, is represented by a class in 
$$H_1(B\Diff(N_1 \bigsqcup \cdots \bigsqcup N_k)) \subseteq H_1(\mM_k).$$
Recall that $\alpha$ corresponds to the connecting homomorphism of the homotopy fiber sequence \eqref{seq: Cobf Genauer type sequence}, and this factors through the connecting homomorphism $\mathfrak{d}$ of  \autoref{identify-alpha}. These observations show that the image of $\alpha$ is generated by the images for all $M$ of the upper composite maps displayed in  \autoref{identify-alpha}. Hence, applying  \autoref{identify-alpha}, we conclude that the image of 
$\alpha$ consists of the classes of mapping tori. 
\end{proof}

\begin{rem} 
While the entire homotopy type of $B\Cobf_d$ is not known in general, we can actually determine it for $d=2$.
    There is a diagram of infinite loop spaces
    $$
\xymatrix{
& \Omega B \Cob_2 \ar[d] \ar@{=}[r] & \Omega B \Cob_2 \simeq \Omega^{\infty} MTSO(2) \ar[d] \\
\Omega^{\infty} MTSO(2) \ar[r] \ar@{=}[d] & \Omega B \Cobf_2 \ar[r] \ar[d]^{\partial} &  \Omega B \Cob_2^\partial \simeq Q\mathbb{C}P^{\infty}_+ \ar[d]^{\partial} \\
\Omega^{\infty} MTSO(2) \ar[r] & Q \mathbb{C}P^{\infty}_+ \ar[r]^{trf} & \Omega QS^0
}$$
where both rows and columns are fiber sequences. The right column is the Genauer fiber sequence \cite{Genauer12, Steimle21}, and the bottom row is constructed by Giansiracusa \cite{Giansiracusa} and 
is equivalent to the Genauer fiber sequence. The middle column is the homotopy fiber sequence of \autoref{seq: Cobf Genauer type sequence}. Here we are using that $\Omega B \End_{1}^{\varnothing} \simeq Q \mathbb{C}P^{\infty}_+$, where $\End_{1}^{\varnothing}$ is the monoid of 1-dimensional oriented smooth closed manifolds.
Since the lower right square is a homotopy pullback, it follows that the middle bottom arrow admits a section up to homotopy, therefore there is a splitting of infinite loop spaces:
$$\Omega B\Cobf_2 \simeq Q \mathbb{C}P^{\infty}_+ \times \Omega^{\infty} MTSO(2).$$
More generally, using arguments similar to \cite[pp. 337-338]{RS14}, the map $\Omega B \Cobf_d \to \Omega B \Cob_d^{\partial} \simeq Q(BSO(d)_+)$ also admits a section up to homotopy for all $d$, so $Q(BSO(d)_+)$ splits off of $\Omega B \Cobf_d$ for all $d$.
\end{rem}

\section{$K_1$-classes of diffeomorphisms and the Kervaire semicharacteristic} \label{section: K_1}

\subsection{Diffeomorphisms and $K_1$-classes} Let $\Mnfld^{\partial, d}_{un}$ denote the category with squares of \emph{unoriented} smooth compact $d$-dimensional manifolds -- more generally, the notation `un' will indicate that we work here with unoriented manifolds throughout this section. 

In this section, we show that $K_1^\square(\Mnfld^{\partial, d}_{un, \Delta})$ is nontrivial and, moreover, that we can detect the Kervaire semicharacteristic under a canonical map $K_1^\square(\Mnfld^{\partial,d}_{un, \Delta})\to K_1(\Z)$. This is the $\pi_1$ level of a map of spectra, which lifts the Euler characteristic. Note that the Kervaire semicharacteristic is an SKK-invariant which is not SK-invariant. 

 Recall that for any Waldhausen category $\C$, any object $C$ with a weak equivalence $\phi\colon C\to C$ gives rise to an element $[C, \phi]\in K_1(\C)$. Given an unoriented closed smooth $d$-manifold $M$ and a diffeomorphism $\varphi \colon M \to M$, we construct an associated element $[M, \varphi]$ in $K_1$ of the scissors congruence $K$-theory of unoriented manifolds, which maps to the corresponding element $[M_+, \varphi_+]$ in $A_1(\ast):= \pi_1 A(\ast)$ under the map of \autoref{subsec:map_to_A-theory}.

\begin{con} \label{topcon} We consider the following sequence of maps 
\[
B \Diff(M) \xrightarrow[]{\lambda_M} \Omega B\Cobf_{d, un} \xrightarrow[]{f_d} K^\square(\Mnfld^{\partial,d}_{un, \Delta}),
\] 
where $M$ is a closed smooth $d$-manifold. Let $\varphi \colon M \to M$ be a diffeomorphism. Since $[\varphi] \in \pi_0 \Diff(M) \cong \pi_1 B \Diff(M)$, any such diffeomorphism $\varphi$ of $M$ yields an element $[M, \varphi] \in K_1^\square(\Mnfld^{\partial,d}_{un, \Delta})$ via $\lambda_M$.  The image of $[\varphi]$ under the composite homomorphism
\[
\pi_1 B \Diff(M) \xrightarrow[]{\pi_1(\lambda_M)} \pi_1 \Omega B\Cobf_{d, un} \xrightarrow[]{f_d} K_1^\square(\Mnfld^{\partial,d}_{un, \Delta}) \xrightarrow[]{g_d} \pi_1 A(BO(d)) \xrightarrow[]{BO(d) \to \ast} \pi_1 A(\ast),
\] 
agrees with $[M_+, \varphi_+]$ in $A_1(\ast) = \pi_1 A(\ast)$. By \autoref{prop: decomposing BM-map as fg}, the composition of $f_d$ with the map to $A(BO(d))$ agrees with the corresponding B\"okstedt-Madsen map.\end{con}

The singular chain complex functor $S \colon \mathcal{R}^{hf}(\ast)\to \ChPerf_\mathbb{Z}$ induces a canonical linearization map $\ell \colon A(\ast) \to K(\ChPerf_{\mathbb{Z}})$ (using the $T\sbt$-construction). Thus, the element $[M_+, \varphi_+] \in A_1(\ast)$ further determines an element $[S(M_+), S(\varphi_+)] \in K_1(\ChPerf_\mathbb{Z}).$ Tracing through the Gillet-Waldhausen equivalence $K(\ChPerf_\mathbb{Z})\simeq K(\mathbb{Z})$ and using the description of $K_1(\mathbb{Z})$ in terms of automorphisms, we obtain the following identification.

\begin{prop}
 The element $[S(M_+), S(\varphi_+)]$ in $K_1(\ChPerf_\mathbb{Z})$ maps to the class 
 \[
 [\bigoplus_{i  \textrm{ even}} H_i(M)/ \tors, \varphi_{\ast, even}]-[\bigoplus_{i\  odd} H_i(M)/ \tors, \varphi_{\ast, odd}] \in K_1(\Z).
 \] 
 \end{prop}
Using the determinant isomorphism $K_1(\Z) \to \Z^{\times} = \{\pm 1\}$, the latter element can be further identified with 
   \[
      \prod_i \det(\varphi_{\ast, i})^{(-1)^i} \in \{\pm 1\},
   \]
where $\varphi_{\ast, i}$ denotes the automorphism induced by  $\varphi$ on $H_i(M)/\tors$. Since we work in $\Z^{\times} = \{\pm 1\}$, the latter is simply $\prod_i \det(\varphi_{\ast, i}) \in \{\pm 1\}.$

\subsection{Detection of the Kervaire semicharacteristic}
We recall that the Kervaire semicharacteristic of a closed smooth $(2k+1)$-dimensional manifold $M$ is a $\Z/2\Z (\cong \{\pm 1\})$-invariant defined as
   $$\kappa(M)=\sum_{i = 0}^k \mathrm{rank} H_{2i}(M, \Z)  \mod 2.$$ 
We now prove the following theorem. 

\begin{thm}\label{K1-classes}
Let $M$ be a closed orientable smooth $(2k+1)$-dimensional manifold and $\varphi \colon M \to M$ an orientation-reversing diffeomorphism. Then under the composition of the maps considered above,
$$K_1^\square(\Mnfld^{\partial,d}_{un, \Delta})\to A_1(\ast) \xrightarrow[]{\pi_1(\ell)} K_1(\Z) \cong \Z/2\Z,$$ 
the element $[M,\varphi]$ maps to $\kappa(M)$.
\end{thm}

\begin{rem} Using \autoref{prop: decomposing BM-map as fg}, the Dwyer-Weiss-Williams index theorem \cite{DWW, RS14, RS17} and the well-known fact that the composite 
$$QS^0 \xrightarrow[]{\eta} A(\ast) \xrightarrow[]{\ell} K(\Z)$$
is a $\pi_1$-isomorphism, \autoref{K1-classes} and its proof below essentially determine $\pi_1$ of the canonical map $B\Diff(M) \to QS^0$ that is defined by the Becker-Gottlieb transfer. 
\end{rem}

\begin{proof}
To prove the theorem, we show that for any closed smooth $d$-dimensional manifold $M$ and diffeomorphism $\varphi \colon M \to M$, the following formula holds
\begin{equation}\label{deteqn}\det(\varphi_i)\det(\varphi_{d-i})=\det(\varphi_d)^{\mathrm{rank} H_{i}(M)},\end{equation} where $\varphi_i$ (resp. $\varphi^i$) denotes here the isomorphism induced on the $i$-th rational homology (resp. cohomology) group. From this formula, when $d=2k+1$, we deduce
$$\prod_{i=0}^{2k+1} \det(\varphi_i) = \prod_{i=0}^{k} \det(\varphi_d)^{\mathrm{rank} H_i(M)},$$ 
which yields the required result, since $\det(\varphi_d)=-1$ when $\varphi$ is orientation-reversing. (Also, for any odd-dimensional manifold $M$ and orientation-preserving diffeomorphism $\varphi \colon M \to M$, we see from this formula that the element $[M,\varphi]$ maps to the trivial element in $\Z/2\Z$.)

It remains to prove \autoref{deteqn}. From the naturality of the cap product the following diagram is commutative ($\Q$-coefficients)
\begin{center}
\begin{tikzcd}
H^i(M) \otimes H_d(M) \arrow[r, "\id \otimes \varphi_d"] \arrow[d, "\varphi^i \otimes \id"'] & H^i(M) \otimes H_d(M) \arrow[r, "\cap", "\cong"'] & H_{d-i}(M) \\
H^i(M) \otimes H_d(M) \arrow[rr, "\cap", "\cong"'] && H_{d-i}(M) \arrow[u, "\varphi_{d-i}"']
\end{tikzcd}
\end{center}
\noindent Hence, 
\[
\det\big( (\id \otimes \varphi_d) \circ ((\varphi^i) ^{-1} \otimes \id) \big) =\det (\varphi_{d-i}).
\]
Using that $\det(\varphi^i)=\det(\varphi_i)$ and the formula for the determinant of a tensor product of maps, we deduce
\[
\det (\varphi_d)^{\mathrm{rank} H^i(M)} \det(\varphi_i)^{-\mathrm{rank} H_d(M)}=\det (\varphi_{d-i}).
\]
Finally, using that $\mathrm{rank} H_d(M)=1$ and $\mathrm{rank} H^i(M)=\mathrm{rank} H_i(M)$, we obtain the desired formula
\[
\det (\varphi_i)  \det (\varphi_{d-i})= \det (\varphi_d)^{\mathrm{rank} H_i(M)}.
\]
\end{proof}

\section{Appendix}

In this appendix, we show that for the construction of the scissors congruence $K$-theory spectrum from \cite{WIT}, there is an analogous $K_1$ computation to that from \autoref{K1-classes}. In \cite{WIT}, we constructed a spectrum $K^\square(\Mnfld_d^{\partial})$, where the input category was the squares category of oriented manifolds with boundary. We can instead take the squares category of unoriented manifolds with boundary, and we obtain an analogous squares $K$-theory, which we denote by  $K^\square(\Mnfld_{d, un}^{\partial})$. All constructions and results from \cite{WIT} could have been stated for both the oriented and unoriented case. We chose to  work in the oriented case throughout \cite{WIT}, because all the constructions and results are more subtle when we need to keep track of the orientations. 

Just as in \cite[Thoerem 4.4.]{WIT}, the spectrum $K^\square(\Mnfld_{d, un}^{\partial})$ recovers the $SK$-group for unoriented manifolds with boundary on $\pi_0$ (in fact, this is even more straightforward for unoriented manifolds since we do not have to worry about preserving orientations).  A map of spectra $$K^\square(\Mnfld_{d, un}^{\partial})\to K(\Z),$$ which recovers the Euler characteristic on $\pi_0$, can be defined exactly the same  as the one from \cite[Theorem 5.1]{WIT}. We note that by construction this map factors through the linearization map $A(\ast) \xrightarrow[]{\ell} K(\Z)$.

The difference between the objects from the input categories of oriented (or unoriented) manifolds considered in \cite{WIT}, and the categories of manifolds with $\theta$-structure corresponding to oriented (or unoriented) manifolds taken as input in this paper, is 
whether the objects come equipped with a homotopy class of a classifying map to $BSO(d)$ (or $BO(d)$), or such a map on the nose -- but the space of such choices of $\theta$-structure, as encoded after thickening, is homotopically discrete in this case.

 We now show that for a diffeomorphism $\varphi\colon M\to M$, we can explicitly construct an element in $K_1^\square(\Mnfld_{d, un}^{\partial})$, directly from the squares construction, which maps to $[M_+, \varphi_+]$ in $A(\ast).$

 \begin{con}
Let $M$ be a closed smooth $d$-manifold and let $\varphi\colon M\to M$ be a diffeomorphism. Consider the following (2,0)-simplex and (1,1)-simplex in the bisimplicial set $N_{\sbt} T_{\sbt}  \Mnfldbd_{d,un}$
	$$ \varnothing \hookrightarrow M\xhookrightarrow{\varphi} M, \ \ \ \ \ \ \ \ \ \ \ \ \ \ \ \ \begin{tikzcd}[column sep=tiny, row sep=tiny]
    	\varnothing\arrow[rr, hook, "="] \arrow[dd, two heads] && \varnothing\arrow[dd,two heads]\\
    	& \square &\\
    	M\arrow[rr, hook, "\varphi"] && M.
    	\end{tikzcd}$$
 Note that these can be glued together to form a 2-sphere in $|N_{\sbt} T_{\sbt} \Mnfldbd_{d,un}|$, and thus represent an element in $K_1^\square(\Mnfldbd_{d,un})$, which we denote by $[M, \varphi]'$. The following diagram illustrates how we get the image of $S^2$ with its poles identified inside our space (we are careful not to glue together the map $\emptyset\to M$ viewed as a vertical map with itself viewed as a horizontal map, since we are not allowed to do that):
    \begin{center}
        \begin{tikzcd}[column sep=tiny, row sep=tiny]
        & \varnothing \arrow[ddr, hook] \arrow[ddl, hook']&\\
        &&&&\\
        M \arrow[rr, hook, "\varphi"] && M\\
        & \square &\\
        \varnothing \arrow[uu, two heads] \arrow[rr, hook, "="'] && \varnothing \arrow[uu, two heads].
    	\end{tikzcd}
	\end{center}
Let $R^{hf}(\ast)$ be the Waldhausen category of homotopy finite pointed spaces, and let  $R^{hf}(\ast)^\square$ be the associated category with squares. The  corresponding map (also denoted by) $g_d \colon \Mnfldbd_{d, un} \to R^{hf}(\ast)^\square$ 
is simply given on objects by $M\mapsto M_+$. The element $[M, \varphi]' \in K_1^\square(\Mnfldbd_{d,un})$ maps to an element in $K_1^\square(R^{hf}(\ast)^\square)$ formed by glueing together the (2,0)- and (1,1)-simplices
\begin{equation}\label{simplices} \ast \hookrightarrow M_+\xhookrightarrow{\varphi_+} M_+ \ \ \ \ \ \ \ \ \text{and} \ \ \ \ \ \ \ \ \begin{tikzcd}[column sep=tiny, row sep=tiny]
    	\ast\arrow[rr, hook, "="] \arrow[dd] && \ast\arrow[dd]\\
    	& \square &\\
    	M_+\arrow[rr, hook, "\varphi_+"'] && M_+.
    	\end{tikzcd}\end{equation}

In order to track where this element maps to under the isomorphism $K_1^\square(R^{hf}(\ast)^\square)\cong A_1(\ast)$, we need to consider the zig-zag from \cite[page 334]{waldhausen}
\[ \xymatrix{ wT\sbt\ \! R^{hf}(\ast) && wT^+\sbt\ \! R^{hf}(\ast) \ar[ll]_-\simeq \ar[rr]^-\simeq && wS\sbt \ \! R^{hf}(\ast).}\]

First, we lift our element in $K_1^\square(\Mnfldbd_{d, un})$ to $\pi_2(|wT^+\sbt\ \! \mathcal{R}^{hf}(\ast)|)$ along the map on the left. Recall that $T_n^+ \mathcal{R}^{hf}(\ast)$ is formed from $T_n \mathcal{R}^{hf}(\ast)$ by including choices of subquotients, and note that the (2,0)- and (1,1)-simplices in $N\sbt w T^+\sbt\ \! \mathcal{R}^{hf}(\ast)$
$$\begin{tikzcd}[column sep=small, row sep=small]
     	\ast\arrow[rr, hook] \arrow[dd, two heads] &&M_+ \arrow[dd, two heads] \arrow[rr, hook, "\varphi_+"] && M_+ \arrow[dd, two heads] \\
 	&&&&\\
 	\ast \arrow[rr, hook] && M+ \arrow[dd, two heads] \arrow[rr, hook, "\varphi_+"] && M_+ \arrow[dd, two heads] \\
 	&&&&\\
 	&& \ast \arrow[rr, hook] && \ast \ar[dd, two heads] \\
 	&&&&\\
 	&&&& \ast
     	\end{tikzcd}\ \ \ \ \ \ \ \  \text{and} \ \ \ \ \ \ \ \  \begin{tikzcd}[row sep=small, column sep = small]
     \ast \arrow[rr, hook, "="] \arrow[dr] \arrow[dd,swap] &&
     \ast \arrow[dd] \arrow[dr] \\
     & M_+ \arrow[rr, hook, crossing over, "\varphi_+"] &&
     M_+ \arrow[dd] \\
     \ast \arrow[rr, hook] \arrow[dr] && \ast \arrow[dr] \\
     & \ast \arrow[rr, hook] \ar[from=2-2,crossing over] && \ast
     \end{tikzcd}$$
\noindent map to the simplices from \autoref{simplices}. Next we consider their images under the righthand map  $w T^+\sbt\ \! \mathcal{R}^{hf}(\ast)\to w S\sbt \ \! \mathcal{R}^{hf}(\ast),$ which deletes the first row and retains the staircase diagram of subquotients. 
We observe that our simplices above map to the (2,0)- and (1,1)-simplices
 	$$\begin{tikzcd}[column sep=small, row sep=small]
 	\ast \arrow[rr, hook] && M+ \arrow[dd, two heads] \arrow[rr, hook, "\varphi_+"] && M_+ \arrow[dd, two heads] \\
 	&&&&\\
 	&& \ast \arrow[rr, hook] && \ast \ar[dd, two heads] \\
 	&&&&\\
 	&&&& \ast
     	\end{tikzcd}\ \ \ \ \ \ \ \  \text{and} \ \ \ \ \ \ \ \  \ast \xrightarrow{\sim} \ast,
 	$$
 where the second is degenerate, and the first (2,0)-simplex has faces given by $M_+, M_+$ and $\ast$. Collapsing the degenerate edge $\ast$ and glueing together the other edges, we obtain a $2$-sphere in $| wS\sbt \ \! \mathcal{R}^{hf}(\ast)|$, which depends on the diffeomorphism $\varphi$. This is precisely the element $[M_+, \varphi_+]$  in $A_1(\ast)$.

\end{con}

The proof of the following theorem is now identical to that of \autoref{K1-classes}.

\begin{thm}\label{K1-classes2}
Let $M$ be a closed orientable smooth $(2k+1)$-dimensional manifold and $\varphi \colon M \to M$ an orientation-reversing diffeomorphism. Then under the map
$$K_1^\square(\Mnfldbd_{d, un})\to  K_1(\Z) \cong \Z/2\Z,$$ 
the element $[M,\varphi]$ maps to $\kappa(M)$.
\end{thm}

 \bibliographystyle{alpha}
  \bibliography{biblio}

\begingroup%
\setlength{\parskip}{\storeparskip}

\end{document}